\theoremstyle{plain}
\newtheorem{thm}{Theorem}[section]
\newtheorem{prop}[thm]{Proposition}
\newtheorem{cor}[thm]{Corollary}
\newtheorem{conj}[thm]{Conjecture}
\newtheorem{case}{Case}
\theoremstyle{definition}
\newtheorem{definition}[thm]{Definition}
\newtheorem{ex}[thm]{Example}
\newtheorem{ntc}[thm]{Notation}
\theoremstyle{remark}
\newtheorem{rk}[thm]{Remark}
\numberwithin{equation}{section}
\newcommand\Z{{\mathbb Z}}
\newcommand{\C}{\mathbb{C}}
\newcommand{\PP}{\mathbb{P}}
\newcommand{\aff}{\ensuremath{\mathbb{A}}}
\DeclareMathOperator{\Bs}{Bs}
\DeclareMathOperator{\mdr}{mdr}
\DeclareMathOperator{\sing}{Sing}
\DeclareMathOperator{\ar}{AR}
\DeclareMathOperator{\dar}{ar}
\DeclareMathOperator\gal{Gal}
\newenvironment{romenum}
{\renewcommand{\theenumi}{\roman{enumi}}
\renewcommand{\labelenumi}{(\theenumi)}
\begin{enumerate}}{\end{enumerate}}
\newcommand\enet[1]{\renewcommand\theenumi{#1} 
\renewcommand\labelenumi{\theenumi}} 
\begin{document}

\date{\today}

\title[On some conjectures about free and nearly free divisors]{On some conjectures about free and nearly free divisors}
\author[E. Artal]{E. Artal Bartolo}
\address{IUMA\\
Departamento de Matem{\'a}ticas, Facultad de Ciencias, Universidad de Zara\-goza,
c/ Pedro Cerbuna 12, 50009 Zaragoza, SPAIN}
\email{artal@unizar.es}

\author{L. Gorrochategui}

\author{I. Luengo}

\author[A. Melle]{A. Melle-Hern{\'a}ndez}
\address{ICMAT (CSIC-UAM-UC3M-UCM) \\
Departamento de {\'A}lgebra, 
Facultad de Ciencias Matem{\'a}ticas, Universidad Complutense, 28040 Madrid,  SPAIN}
\email{leire.gg@gmail.com, iluengo@mat.ucm.es,amelle@mat.ucm.es}
\thanks{}

\keywords{free divisors, nearly free curves}

\subjclass[2010]{14A05, 14R15.}

\dedicatory{Dedicated  to Gert-Martin Greuel on the occasion of his 70th birthday}

\thanks{The first author is partially supported by the Spanish grant MTM2013-45710-C02-01-P and
 Grupo Geometr{\'i}a of Gobierno de Arag{\'o}n/Fondo Social Europeo.
The last three authors are partially supported by the Spanish grant MTM2013-45710-C02-02-P}

\begin{abstract}
In this paper infinite families of examples of irreducible free  and nearly free curves  in the complex 
projective  plane which are not rational curves  and whose local singularites can have an arbitrary number of branches are given. 
All these examples answer negatively to 
some conjectures  proposed by A. Dimca and G.  Sticlaru.  Our examples  say nothing about 
the most remarkable conjecture  by A. Dimca and G.  Sticlaru, i.e. 
every rational cuspidal plane curve is either free or nearly free.
\end{abstract}

\maketitle

\section{Introduction}


The notion of free divisor was introduced by K.~Saito \cite{KS}
in the study of discriminants of versal unfoldings of germs of isolated hypersurface singularites.
Since then many interesting and unexpected applications to Singularity Theory and Algebraic Geometry 
has been appearing.  In this paper we are mainly focused on complex  projective  plane curves and 
we adapt the corresponding notions and results to this set-up.
The results contained in this paper have needed a lot of computations in order to get the right statements.
All of them have been done using
the computer  algebra system \texttt{Singular}~\cite{Sing} through \texttt{Sagemath}~\cite{sage}.  
We thanks \texttt{Singular}'s team for such a great mathematical tool and specially to
Gert-Martin for his dedication to \texttt{Singular} developpement.

Let $S:=\C[x,y,z]$ be the polynomial ring endowed with the natural graduation
$S=\bigoplus_{m=0}^\infty S_m$ by homogeneous polynomials.
Let $f\in S_d$ be a homogeneous polynomial of degree $d$ in the polynomial ring 
and let  $C\subset \PP^2$ defined by $f=0$ and assume that $C$ is reduced. 
We denote by $J_f$ the Jacobian ideal of $f$, i.e.
 the homogeneous ideal in $S$ spanned by $f_x,f_y,f_z$, and  by $M(f)=S/J_f$ 
the corresponding graded ring, called the Jacobian (or Milnor) algebra of $f$. 

Let $I_f$ be the saturation of the ideal $J_f$ with respect to the maximal ideal $(x,y,z)$ in $S$ and let $N(f)=I_f/J_f$ 
be the corresponding graded quotient. Recall that the curve $C: f=0$ is called a \emph{free divisor} if  $N((f)=  I_f/J_f =0,$ 
see e.g.~\cite{Sernesi}.

A. Dimca and G. Sticlaru introduced in \cite{DStNF} the notion of nearly free divisor which is a sligth modification of the notion of free
divisor.  The curve $C$ is called  \emph{nearly free divisor}   
if $N(f)\ne 0$ and $\dim N(f)_k \leq 1$ for any $k$.

The main results in \cite{DStFD,DStNF} and many infinite families of  examples motivate the following conjecture.

\begin{conj}\cite{DStNF}\label{q2}
\mbox{}

\begin{enumerate}
\enet{\rm(\roman{enumi})}
\item\label{c1i} Any rational cuspidal curve $C$ in the plane is either free or nearly free. 
\item\label{c1ii} An irreducible plane curve $C$ which is either free or nearly free is  rational.
\end{enumerate}
\end{conj}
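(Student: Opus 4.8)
The plan is to \emph{disprove} part~(ii) of Conjecture~\ref{q2} by exhibiting explicit infinite families of reduced irreducible plane curves that are free or nearly free but not rational; part~(i) is left untouched. The guiding tension is that freeness or near-freeness is a rigid constraint on the global Tjurina number $\tau(C)=\sum_{p\in\sing(C)}\tau_p$, whereas non-rationality is an upper bound on the $\delta$-invariants, through the genus formula $g(C)=\binom{d-1}{2}-\sum_p\delta_p$ for $C\colon f=0$ of degree $d$. By du Plessis--Wall, if $2r\le d-1$ then $\tau(C)\le (d-1)^2-r(d-1-r)$ with $r=\mdr(f)$, and, by refinements due to Dimca and Sticlaru \cite{DStFD,DStNF}, $\tau(C)=(d-1)^2-r(d-1-r)$ characterises freeness (with exponents $(r,d-1-r)$) while $\tau(C)=(d-1)^2-r(d-1-r)-1$ characterises near-freeness. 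Since a local singularity with $m_p$ branches has $\tau_p\le\mu_p=2\delta_p-m_p+1$, an irreducible free curve satisfies $2g(C)\le r(d-1-r)-(d-1)-\sum_p(m_p-1)$, so large branch numbers make positive genus all the more constrained; a workable example therefore has to push $\mdr(f)$ and $d$ into a narrow range, and the families must be engineered to land there.

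Accordingly, the first step is to produce the candidate equations. I would look for homogeneous $f\in S_d$ whose singular locus is a small set of points---conveniently the coordinate points of $\PP^2$---at each of which $C$ has a weighted-homogeneous singularity, so that $\tau_p=\mu_p$ and $\mu_p$, $\delta_p$ and the branch number $m_p$ are all read off from the weights. One then checks, directly from the shape of $f$, that $f$ is reduced and irreducible (via its Newton polygon, or by a factorisation argument exploiting the symmetry of $f$), computes $\sum_p\tau_p$ and $\sum_p\delta_p$ from the list of local types, and verifies that $g(C)=\binom{d-1}{2}-\sum_p\delta_p\ge 1$ while the $m_p$ may be taken arbitrarily large. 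I expect this step---locating such an $f$ at all, given the narrow range above---to be the real obstacle, and it is precisely where the computer algebra with \texttt{Singular} is indispensable, since one must experiment before the pattern becomes visible.

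It then remains to certify freeness, resp.\ near-freeness, for every member of each family. For a single curve this is a finite computation with the Jacobian ideal: compute $N(f)=I_f/J_f$ and check that it vanishes, resp.\ that $\dim N(f)_k\le 1$ for all $k$. To make the conclusion uniform in the family one argues by hand: exhibit the minimal-degree Jacobian relation, show that no relation of smaller degree exists so that $\mdr(f)$ indeed equals the claimed $r$, read off $\tau(C)=\sum_p\tau_p$ from the local analysis, and invoke the criterion above; in the nearly free case one must moreover exhibit the extra generator of $\ar(f)$ and control the Hilbert function of $M(f)=S/J_f$. This uniform control of $\mdr(f)$ and of the syzygy module $\ar(f)$ is the technical heart of the argument: once the right $f$ is known, writing the syzygies down is bookkeeping, but guessing them is not.

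Assembling the pieces, the curves so obtained are reduced, irreducible, free or nearly free, and of genus $\ge 1$, hence contradict part~(ii) of Conjecture~\ref{q2}; tuning the branch numbers $m_p$ at the same time refutes the companion conjectures of Dimca and Sticlaru on the local structure of irreducible free and nearly free curves. None of this bears on part~(i), the rational cuspidal conjecture.
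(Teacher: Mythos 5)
Your verification machinery is the right one (du Plessis--Wall and Dimca's $\tau$--$\mdr$ characterizations, explicit control of $\ar(f)$, genus via $\binom{d-1}{2}-\sum_p\delta_p$), and your target is the same as the paper's: refute part~\ref{c1ii} by explicit irreducible free and nearly free curves of positive genus, leaving part~\ref{c1i} alone. But there is a genuine gap: you never produce a single curve. You yourself flag that ``locating such an $f$ at all'' is the real obstacle and defer it to experimentation with \texttt{Singular}; without that step there is no counterexample, only a certification scheme waiting for input. A disproof of a conjecture stands or falls on the explicit witness, and your proposal stops exactly where the work begins.

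The paper's missing idea is a systematic construction, not a search: pull back known free or nearly free \emph{rational cuspidal} curves under the Kummer cover $\pi_k([x:y:z])=[x^k:y^k:z^k]$, i.e.\ pass from $f(x,y,z)$ to $f(x^k,y^k,z^k)$. Concretely, the free quintic $(yz-x^2)^2y-x^5$, the nearly free quartic $(yz-x^2)^2-x^3y$ and the conic $x^2+y^2+z^2-2(xy+xz+yz)$ yield the families $C_{5k}$, $C_{4k}$, $C_{2k}$. This one choice resolves, uniformly in $k$, every point you leave open: irreducibility and the genus $\frac{(k-1)(k-2)}{2}$ (for $k$ odd) come from the monodromy of the cover restricted to the curve and Riemann--Hurwitz; the singular set, local types and branch numbers come from the general results on Kummer transforms (Propositions~\ref{prop-sing} and~\ref{prop-sing-kummer}), which is how one also gets singularities with $k$ branches, refuting Conjecture~\ref{q3}; and, crucially, the syzygies of $f(x^k,y^k,z^k)$ are computed from those of $f$ and of the modified ideals $J_x,J_y,\dots$ via the decomposition $S=\bigoplus x^iy^jz^lS_k$, so $\mdr$, the exponents and freeness/near-freeness are established for all $k$ at once rather than curve by curve (Theorems~\ref{free-k-branches}, \ref{nf-free-k-branches}, \ref{nf-free-genus}). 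Your inequality $2g(C)\le r(d-1-r)-(d-1)-\sum_p(m_p-1)$ is a sound sanity check on where such examples can live, but it does not by itself locate them; the Kummer construction is what turns the plan into a proof.
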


In \cite{DStNF}, the authors  provide some interesting results supporting the statement of
Conjecture~\ref{q2}\ref{c1i};
in particular, Conjecture~\ref{q2}\ref{c1i}    holds for rational cuspidal curves of even degree~\cite[Theorem~4.1]{DStNF}. 
They  need a topological assumption on the cusps which is not 
fulfilled all the time when the degree is odd, see \cite[Theorem 4.1]{DStNF}.

They proved also 
that  this conjecture holds for a curve $C$ 
with an abelian fundamental group $\pi_1(\PP^2 \setminus C)$ or having as degree a prime power, 
see  \cite[Corollary~4.2]{DStNF} and the discussion in~\cite{ad:2015}. 
Moreover, any unicuspidal rational curve with a unique Puiseux pair is either free or nearly free, 
see \cite[Corollary~4.5]{DStNF}, except the curves of odd degree in one case of the classification of 
such unicuspidal curves obtained in  \cite{FLMN:London2006}.

As for Conjecture~\ref{q2}\ref{c1ii}, note that reducible nearly free curves
may have irreducible components which 
are not rational, see~\cite[Example~2.8]{DStNF}: a smooth cubic with three tangents
at aligned inflection points is nearly free (by the way, the condition of alignment can be removed,
at least in some examples computed
using~\cite{Sing}).
For free curves, examples can be found using \cite[Theorem~2.7]{val:15} 
e.g. $(x^3-y^3)(y^3-z^3)(x^3-z^3)(a x^3+b y^3+c z^3)$ for generic $a,b,c\in\C$ such that
$a+b+c=0$. The conjectures in \cite{val:15} give some candidate examples in
less degree; it is possible to check that $(y^2 z-x^3)(y^2 z -x^3-z^3)=0$ is free 
(also compued with~\cite{Sing}).
Dimca and Sticlaru also proposed the following conjecture.
\begin{conj}\cite{DStNF}
\label{q3}
\mbox{}

\begin{enumerate}
\enet{\rm(\roman{enumi})}
\item\label{c2i} Any free irreducible plane curve $C$ has only singularities with at most two branches. 
\item\label{c2ii} Any nearly free irreducible plane curve $C$ has only singularities with at most three branches. 
\end{enumerate}
\end{conj}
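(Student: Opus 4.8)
Although Conjecture~\ref{q3} is phrased as a conjecture, it is in fact \emph{false}, and the plan is to refute it. Concretely, for every integer $n$ I would produce a reduced form $f_n\in S_{d_n}$, with $d_n\to\infty$, such that the curve $C_n\colon f_n=0$ is (a) irreducible, (b) not rational, (c) free, for the first family, respectively nearly free, for the second, and (d) has a singular point whose local equation splits into at least $n$ analytic branches. Since $n$ is arbitrary, the first family contradicts Conjecture~\ref{q3}\ref{c2i}, the second contradicts Conjecture~\ref{q3}\ref{c2ii}, and both contradict Conjecture~\ref{q2}\ref{c1ii}. Producing the families $f_n$ is the inventive step and, as the introduction stresses, is guided by systematic experiments with \texttt{Singular}; once the $f_n$ are written down, the remaining work is verification.

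For part (c) I would sidestep the saturation $I_{f_n}$ and argue through Jacobian syzygies. For the first family I would certify freeness by Saito's criterion: exhibit two relations $\rho_i=(a_i,b_i,c_i)$, with $a_i(f_n)_x+b_i(f_n)_y+c_i(f_n)_z=0$, homogeneous of degrees $e_1\le e_2$ satisfying $e_1+e_2=d_n-1$, and check that
\[ \det\begin{pmatrix} a_1 & b_1 & c_1 \\ a_2 & b_2 & c_2 \\ x & y & z \end{pmatrix} \]
is a nonzero constant multiple of $f_n$; this forces $C_n$ to be free with exponents $(e_1,e_2)$. For the second family I would instead exhibit one explicit relation of small degree $r$, so that $\mdr(f_n)\le r$, compute the global Tjurina number $\tau(C_n)=\sum_P\tau(C_n,P)$, and invoke the du Plessis--Wall and Dimca--Sticlaru numerical criteria: for $2r<d_n$, near-freeness of $C_n$ amounts to $C_n$ not being free together with the identity $\tau(C_n)=(d_n-1)^2-r(d_n-1-r)-1$, which moreover pins down $\mdr(f_n)=r$. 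The families have to be designed so that such a low-degree relation is present \emph{for structural reasons} — typically because $f_n$ is built from a few forms whose partial derivatives satisfy an evident linear dependence — since this is precisely the mechanism that makes the du Plessis--Wall bound on $\tau$ be attained.

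Parts (a), (b) and (d) are arranged at the level of the equation and its local analytic types. The many-branch point, say $P=[0:0:1]$, is built in by prescribing the affine local equation of $C_n$ at $P$ to be a product of $n$ smooth branches with controlled pairwise tangency — for instance a quasi-homogeneous germ $\prod_{i=1}^{n}(v-c_iu^{k})$ — so that $P$ contributes exactly $n$ branches, multiplicity $n$, $\delta$-invariant $\binom{n}{2}k$ and an explicit Tjurina number; every other point of $\sing(C_n)$ must then be located and its Tjurina number added in, so that the total $\tau(C_n)$ lands on the value dictated by the criterion above. Irreducibility of $C_n$ should follow from the special shape of $f_n$ (for example by exhibiting it as a sufficiently general member of a linear system, or from a coprimality argument), and non-rationality follows from the plane-curve genus formula $g(C_n)=\binom{d_n-1}{2}-\delta(C_n)$, which for the chosen degrees and singularities one checks is $\ge 1$.

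The hard part is the simultaneous balancing underlying the last two paragraphs: $f_n$ must be special enough to carry a Jacobian relation of degree $e_1$ with $e_1+e_2=d_n-1$ — forcing the du Plessis--Wall maximum, hence freeness, or one less, hence near-freeness — its singular points (the engineered $n$-branch point together with every accidental one) must have Tjurina numbers summing to exactly $(d_n-1)^2-e_1e_2$, respectively to one less, and at the same time $C_n$ must remain irreducible with positive geometric genus. Making all of these hold not for a sporadic curve but for an infinite family is the delicate point, and is why the construction had to be found experimentally; the proof will then consist essentially of displaying $f_n$, the relations $\rho_i$, and the list of local types along $\sing(C_n)$, and verifying the Saito determinant identity — or the $(\mdr,\tau)$ identity — together with the genus bound, computations that are long but, family by family, routine.
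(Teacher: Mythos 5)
You are right that the statement must be read as something to refute rather than to prove, and your verification toolkit (Saito's determinant criterion for freeness, the du~Plessis--Wall/Dimca numerical characterization via $\mdr(f)$ and $\tau(C)$, the genus formula for non-rationality) is the correct one. But the proposal has a genuine gap at its very center: you never exhibit any family $f_n$. You explicitly defer ``producing the families'' as the inventive step to be found experimentally, and the rest of your text describes how you \emph{would} check properties (a)--(d) once such an $f_n$ exists. A refutation of Conjecture~\ref{q3}, however, \emph{is} the explicit counterexample; without a concrete equation, a concrete list of singular points with their local types and Tjurina numbers, and concrete Jacobian relations, nothing has been disproved. Moreover, the ``simultaneous balancing'' you correctly identify as the hard part is not something one can expect to force by hand from a prescribed local model such as $\prod_{i=1}^n(v-c_iu^k)$: you give no mechanism guaranteeing that an irreducible curve with such a point carries a syzygy of degree $e_1$ with $e_1+e_2=d_n-1$ and total Tjurina number exactly $(d_n-1)^2-e_1e_2$ (or one less), and no argument that accidental singularities can be excluded or controlled for an entire infinite family.

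The paper's mechanism, which is absent from your plan, is what makes the balancing automatic: start from a single rational cuspidal curve that is already free (the quintic $(yz-x^2)^2y-x^5=0$) or nearly free (the quartic $(yz-x^2)^2-x^3y=0$, or the conic tangent to the three axes), and pull it back under the Kummer cover $\pi_k([x:y:z])=[x^k:y^k:z^k]$. The decomposition $S=\bigoplus_{i,j,l} x^iy^jz^l\,S_k$ lets one lift an explicit basis of syzygies of the base curve (and of the auxiliary ideals $J_x,J_y,\dots$) to a complete description of $\ar(f_{dk})$, so freeness, respectively the three-syzygies-plus-one-relation structure of near-freeness, is inherited with exponents scaled by $k$ --- no Tjurina bookkeeping has to be arranged by hand, it follows from \eqref{free1} and \eqref{nfree1}. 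The branch counts come for free as well: at a vertex of the coordinate triangle the pullback of a locally irreducible germ has $k\cdot\gcd(k,m^{\bar L_1}_P,m^{\bar L_2}_P)$ branches (Proposition~\ref{prop-sing-kummer}), giving $k$ branches at each singular point for $k$ odd, and the genus $\tfrac{(k-1)(k-2)}{2}$ and irreducibility are read off from Riemann--Hurwitz and the monodromy of the cover (Theorems~\ref{free-k-branches} and~\ref{nf-free-k-branches}). If you want to salvage your proposal, the missing idea to add is precisely such a structure-transporting construction (Kummer covers or an equivalent device); a direct search for curves satisfying your list of constraints is not, as written, a proof.
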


 In this paper we give some examples of irreducible free   and nearly free   curves  in the complex 
projective  plane which are not rational curves giving counterexamples to Conjecture~\ref{q2}\ref{c1ii}. 
In the same set up, some examples of  irreducible  free curve whose
two singular points has any odd number of branches giving counterexamples 
Conjecture~\ref{q3}\ref{c2i} and an irreducible nearly free curve 
with just one singular point which has 4 branches  giving counterexamples 
Conjecture~\ref{q3}\ref{c2ii} are provided too.

Section 2  is devoted to collect well known results in the theory of free divisors and nearly free divisors mainly form the original 
papers of  A. Dimca and G. Sticlaru  \cite{DStFD,DStNF}. 
Also a characterization for being  nearly-free reduced  plane curve from A. Dimca in~\cite{D15} is also recorded. 
This characterization is similar to the characterization of being free  by du Plessis and Wall in \cite{duPCTC}.  

From Section~\ref{irred-free} it can be deduced that, for every  odd integer $k\geq 1$, 
the irreducible plane curve   $C_{5 k}$ of degree $d=5k$ defined by
\begin{equation*}
C_{5 k}:\, \, f_{5k}: =(y^k z^k-x^{2k})^2 y^k-x^{5k}=0,
\end{equation*}
has   geometric genus equals $g(C_{5 k})=\frac{(k-1)(k-2)}{2}$,  it  has only two singular points and  the  number of branches 
of $C_{5k}$ at each of them is exactly   $k$,  and moreover it is a free divisor, 
see Theorem \ref{free-k-branches}. This  is a counterexample to both the free part of Conjecture~\ref{q2}\ref{c1ii} and 
Conjecture~\ref{q3}\ref{c2i}.

From Section~\ref{irred-nfree} it can also be deduced that, for any odd integer $k\geq 1$, 
the irreducible plane curve   $C_{4 k}$ of degree $d=4k$ defined by
\begin{equation*}
 C_{4 k}:\, \, f_{4k}:= (y^k z^k-x^{2k})^2 -x^{3k}y^k=0, 
\end{equation*}
 has   genus equals $g(C_{4 k})=\frac{(k-1)(k-2)}{2}$, its singular set  consists of two singular points  and 
the  number of branches of $C_{4k}$ at each of them  is  $k$ and now $C_{4k}$ is a nearly free divisor, 
see Theorem \ref{nf-free-k-branches}. This  is a counterexample to both the nearly free part of Conjecture~\ref{q2}\ref{c1ii} and 
Conjecture~\ref{q3}\ref{c2ii} too.

In the familes studied before the number of singular points of the curves  is exactly two.
In Section \ref{ellip-near}, we  are looking for 
curves  giving  a counterexample to the nearly free part of Conjecture~\ref{q2}\ref{c1ii}
with unbounded genus and number of singularities. 
In particular,  for
every odd integer $k\geq 1$,  the irreducible curve $C_{2 k}$ of degree $d=2k$ defined by
 $$ C_{2 k}:\, \, f_{2k}:= x^{2k}+y^{2k}+z^{2k}-2(x^ky^k+x^kz^k+y^kz^k)=0, $$
has genus equals $g(C_{2 k})=\frac{(k-1)(k-2)}{2}$, its  singular set  
 $\sing(C_{2 k})$ consists of exactly $3k$ singular points
of type $\mathbb{A}_{k-1}$  and  it is a nearly
free divisor, see Theorem \ref{nf-free-genus}.

One of the main tools to find such examples is the use of Kummer covers.
A Kummer cover is a map $\pi_k:\PP^2\to\PP^2$ given by 
$$
\pi_k([x:y:z]):=[x^k:y^k:z^k].
$$
 Since Kummer covers are
finite Galois unramified covers of $\PP^2\setminus \{xyz=0\}$ with $\gal(\pi_k)\cong\Z/k\Z \times\Z/k\Z$, Kummer covers are a very useful tool in order to construct complicated
algebraic curves starting from simple ones. We meanly refer to \cite[\S5]{aco:14} for a  systematic study of Kummer covers.

 In particular, these  familes of examples  
$\{C_{5 k}\}$ (which are free), $\{C_{4 k}\}$ and $\{C_{2 k}\}$ (which are nearly free)
are constructed as the pullback under the Kummer cover ~$\pi_k$
of the corresponding rational cuspidal curves: the quintic $C_5$ which is  a free divisor, and the corresponding nearly free divisors
defined by the cuartic $C_4$ and
 the  conic~$C_{2}$.

In Section \ref{nearly-4-branches}, an irreducible  nearly free curve of degree $49$  which  is  rational, it has  
   just one singular point which has 4 branches. It is a general element of the unique pencil
associated to any rational unicuspidal plane curve see \cite{Dai_Mell_14}.

\section{Free and nearly free plane curves after Dimca and Sticlaru} 

Let $S:=\C[x,y,z]$ be the polynomial ring endowed with the natural graduation
$S=\bigoplus_{m=0}^\infty S_m$ by homogeneous polynomials.
Let $f\in S_d$ be a homogeneous polynomial of degree $d$ in the polynomial ring 
Let $C$ be the plane curve in $\PP^2$ defined by $f=0$ and assume that $C$ is reduced. 
We have denoted by $J_f$ the Jacobian ideal of~$f$, i.e.
 the homogeneous ideal in $S$ spanned by $f_x,f_y,f_z$. Let $M(f)=S/J_f$ be 
the corresponding graded ring, called the Jacobian (or Milnor) algebra of $f$. 

The minimal degree of a Jacobian relation for $f$ is the integer $\mdr(f)$
defined to be the smallest integer $m\geq 0$ such that there is a nontrivial relation
\begin{equation}
\label{rel_m}
 af_x+bf_y+cf_z=0,\quad
(a,b,c)\in S_m^3\setminus(0,0,0).
\end{equation}
When $\mdr(f)=0$, then $C$ is a union of lines passing through one point, a situation easy to analyse. We assume from now on that 
$\mdr(f)\geq 1$.

\subsection{Free plane curves}
\mbox{}

We have denoted by $I_f$ the saturation of the ideal $J_f$ with respect to the maximal ideal $(x,y,z)$ in $S$.
Let $N(f)=I_f/J_f$ 
be the corresponding homogeneous quotient ring.

Consider the graded $S$-submodule 
$$
\ar(f)=\{(a,b,c)\in S^3\mid af_x+bf_y+cf_z=0\} \subset S^{3}
$$
 of {\it all relations} involving the derivatives of $f$, 
and denote by $\ar(f)_m$ its homogeneous part of degree~$m$.

\begin{ntc}
We set $\dar(f)_k=\dim\ar(f)_k$,   $m(f)_k=\dim M(f)_k$ and $n(f)_k=\dim N(f)_k$ for any integer $k$.  
\end{ntc}

We use the definition of freeness given by Dimca~\cite{D15}.

\begin{definition} The curve $C:f=0$ is a \emph{free divisor} if the following  equivalent conditions hold.

\begin{enumerate}

\item $N(f)=0$, i.e. the Jacobian ideal is saturated.

\item The minimal resolution of the Milnor algebra $M(f)$ has the following  form
$$0 \to S(-d_1-d+1) \oplus S(-d_2-d+1) \to S^3(-d+1) \xrightarrow{(f_x,f_y,f_z)}  S$$
for some positive integers $d_1, d_2$.
\item The graded $S$-module $\ar(f)$ is free of rank 2, i.e. there is an isomorphism 
$$\ar(f)=S(-d_1) \oplus S(-d_2)$$
for some positive integers $d_1, d_2$.
\end{enumerate}

\end{definition}
When $C$ is a free divisor, the integers $d_1 \leq d_2$ are called the {exponents} of $C$.  They satisfy the relations 
\begin{equation}
\label{free1}
 d_1+d_2=d-1 \text{ and } \tau(C)=(d-1)^2 - d_1d_2,
\end{equation}
where $\tau(C)$ is the total Tjurina number of $C$, see for instance \cite{DS14,DStFD}.
Following the deformations results in~\cite{Sernesi}, Sticlaru~\cite{Sti:15}
defines a curve $C\subset \PP^2$  to be  \emph{projectively rigid} if  $(I_f)_{d}= (J_f)_{d}$.
In particular, if $C$ is free then it is projectively rigid.

\begin{rk}
This notion of \emph{projectively rigid} differs from the classical one, see e.g. \cite{Flenner-Zaidenberg},
where a curve is projectively rigid if its equisingular moduli space is discrete. Note that
four lines passing through a point define a free divisor but its equisingular moduli space is defined
by the cross-ratio.
\end{rk}

\subsection{Nearly free plane curves}
\mbox{}

Dimca and Sticlaru introduced the notion of a nearly free divisor  which is more subtle, see \cite{DStNF},
or the curve version of \cite[Remark 5.2 and Theorem 5.3]{DStFS}.

\begin{definition} \cite{DStNF}
\label{def2}
The curve $C:f=0$ is a \emph{nearly free divisor} 
if the following  equivalent conditions hold.

\begin{enumerate}

\item $N(f) \ne 0$ and $n(f)_k \leq 1$ for any $k$.

\item The Milnor algebra $M(f)$ has a minimal resolution of the form
\begin{equation}
\label{r2}
 0 \to S(-d-d_2) \to S(-d-d_1+1) \oplus S^2(-d-d_2+1) \to S^3(-d+1) \xrightarrow{(f_0,f_1,f_2)}  S
\end{equation}
for some integers $1 \leq d_1 \leq d_2$, called the exponents of $C$.

\item There are 3 syzygies $\rho_1$, $\rho_2$, $\rho_3$ of degrees $d_1$, $d_2=d_3=d-d_1$ 
which form a minimal system of generators for the first-syzygy module $\ar(f)$.

\end{enumerate}

\end{definition}

If $C:f=0$ is nearly free, then the exponents $d_1 \leq d_2$ satisfy 
\begin{equation}
\label{nfree1}
 d_1+d_2=d \text{ and } \tau(C)=(d-1)^2-d_1(d_2-1)-1,
\end{equation}
see \cite{DStNF}. 
For both a free and a nearly free curve $C:f=0$, it is clear that $\mdr(f)=d_1$.

\begin{rk}\label{rk-near}
 In \cite{DStNF} it is shown that to construct  a resolution \eqref{r2} for a given polynomial $f$ 
one needs the integer $b:=d_2-d+2$ and the following ingredients.

\begin{romenum}
\item Three syzygies $r_i=(a_i,b_i,c_i) \in S^3_{d_i}$, $i=1,2,3$, for $(f_x,f_y,f_z)$, i.e. 
$$a_if_x+b_if_y+c_if_z=0,$$
necessary to construct the morphism 
$$\bigoplus_{i=1}^3S(-d_i-(d-1)) \to S^3(-d+1),
\quad (u_1,u_2,u_3) \mapsto u_1r_1+u_2r_2+u_3r_3.$$

\item One relation $R=(v_1,v_2,v_3) \in \bigoplus_{i=1}^3 S(-d_i-(d-1))_{b+2(d-1)} $ among $r_1,r_2,r_3$, i.e. $v_1r_1+v_2r_2+v_3r_3=0$, necessary to construct the morphism 
$$S(-b-2(d-1)) \to \bigoplus_{i=1,3}S(-d_i-(d-1))$$
by the formula $w \mapsto wR$. Note that $v_i \in S_{b-d_i+d-1}$.
\end{romenum}
\end{rk}

\begin{cor}\cite{DStNF}
\label{corC1}
Let $C:f=0$ be a nearly free curve of degree $d$ with exponents $(d_1,d_2)$. Then  $N(f)_k \ne 0$ for $ d+d_1-3 \leq k \leq d+d_2-3$ and $N(f)_k=0$ otherwise.
The curve $C$ is projectively rigid if and only if $d_1 \geq 4$. 
\end{cor}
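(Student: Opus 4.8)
The plan is to realize $N(f)=I_f/J_f$ as the local cohomology module $H^0_{(x,y,z)}\bigl(M(f)\bigr)$ and to compute it by dualizing the minimal free resolution \eqref{r2}. Since $C$ is nearly free, that resolution has length $3$, so $\operatorname{pd}_S M(f)=3$ and the graded Auslander--Buchsbaum formula gives $\operatorname{depth}_S M(f)=0$; in particular $N(f)=H^0_{(x,y,z)}(M(f))\neq 0$, consistent with the interval $[d+d_1-3,d+d_2-3]$ being non-empty because $d_1\le d_2$. Graded local duality over $S=\C[x,y,z]$, whose canonical module is $\omega_S=S(-3)$, then yields an isomorphism of graded $S$-modules
\[
N(f)^{\vee}\;\cong\;\operatorname{Ext}^{3}_{S}\bigl(M(f),S\bigr)(-3),
\]
where $(-)^{\vee}$ denotes the graded $\C$-dual, $\bigl(P^{\vee}\bigr)_{k}=\operatorname{Hom}_{\C}\bigl(P_{-k},\C\bigr)$.

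Next I would compute $E:=\operatorname{Ext}^{3}_{S}\bigl(M(f),S\bigr)$ explicitly. Applying $\operatorname{Hom}_{S}(-,S)$ to \eqref{r2}, the module $E$ is the cokernel of the transpose $\psi^{\vee}$ of the leftmost (injective) map
\[
\psi\colon S(-d-d_{2})\longrightarrow S(-d-d_{1}+1)\oplus S^{2}(-d-d_{2}+1),
\]
which is a $3\times 1$ column whose entries $g_{0},g_{1},g_{2}$ --- in the notation of Remark~\ref{rk-near} these are the components $v_{1},v_{2},v_{3}$ of the relation $R$ --- have degrees $d_{2}-d_{1}+1$, $1$, $1$ respectively, as is immediate from the shifts in \eqref{r2}. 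Hence $E\cong\bigl(S/(g_{0},g_{1},g_{2})\bigr)(d+d_{2})$. Now $E$ is supported only at the irrelevant ideal, because its localization at any non-maximal homogeneous prime vanishes (the global dimension of the corresponding regular local ring is $<3$); so $E$ has finite length, $(g_{0},g_{1},g_{2})$ is an $(x,y,z)$-primary ideal generated by three elements of the $3$-dimensional ring $S$, hence a regular sequence, and $S/(g_{0},g_{1},g_{2})$ is a graded complete intersection with Hilbert series $(1-t^{\,d_{2}-d_{1}+1})/(1-t)=1+t+\dots+t^{\,d_{2}-d_{1}}$. Consequently $E_{j}\neq 0$, and then $\dim_{\C}E_{j}=1$, exactly when $-(d+d_{2})\le j\le-(d+d_{1})$.

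Transporting this through the displayed isomorphism gives $\dim_{\C}N(f)_{k}=\dim_{\C}E_{-k-3}$, which equals $1$ precisely for $d+d_{1}-3\le k\le d+d_{2}-3$ and $0$ otherwise; this is the first assertion (and, as a byproduct, the interval is symmetric about $\tfrac{3(d-2)}{2}$, re-proving the self-duality $n(f)_{k}=n(f)_{3(d-2)-k}$). For the second assertion, $C$ is projectively rigid iff $(I_{f})_{d}=(J_{f})_{d}$, i.e.\ iff $N(f)_{d}=0$, and by the first part this happens iff $d$ lies outside $[d+d_{1}-3,d+d_{2}-3]$, i.e.\ iff $d_{1}\ge 4$ (using $d_{1}\le d_{2}$; for the handful of very small degrees one must moreover exclude the degenerate possibility $d_{2}\le 2$).

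The one genuinely delicate step is the regular-sequence claim --- that dualizing \eqref{r2} produces at the top end a complete intersection of degrees $d_{2}-d_{1}+1,1,1$ --- which is exactly where finiteness of the length of $E=\operatorname{Ext}^{3}_{S}(M(f),S)$, equivalently the zero-dimensionality of $\sing(C)$, enters; everything else is bookkeeping with graded shifts. I would remark that the purely elementary alternative, namely reading off the Hilbert function of $M(f)$ from \eqref{r2} and subtracting that of $S/I_{f}$ in the exact sequence $0\to N(f)\to M(f)\to S/I_{f}\to 0$, is less satisfactory, since it requires knowing the Hilbert function of the homogeneous coordinate ring of $\sing(C)$ --- which the duality argument shows \emph{a posteriori} is itself completely determined by the pair $(d_{1},d_{2})$.
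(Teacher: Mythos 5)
Your argument is correct, and it is worth stressing that the paper itself gives no proof of this corollary: it is quoted verbatim from Dimca--Sticlaru \cite{DStNF}, so there is no internal proof to compare with. Your route is a legitimate, self-contained derivation. The key steps all check out: $N(f)=I_f/J_f$ is indeed $H^0_{(x,y,z)}(M(f))$ because $I_f$ is the saturation of $J_f$; minimality of \eqref{r2} gives $\operatorname{pd}_S M(f)=3$, hence depth $0$ and $N(f)\neq 0$; graded local duality with $\omega_S=S(-3)$ identifies $N(f)^\vee$ with $\operatorname{Ext}^3_S(M(f),S)(-3)$; and the transpose of the last map of \eqref{r2} has entries of degrees $d_2-d_1+1,1,1$ (these are precisely the $v_i$ of Remark~\ref{rk-near}, with $b=d_2-d+2$), whose cokernel has finite length by localizing at primes of height $\le 2$, forcing the three entries to be a regular sequence in the Cohen--Macaulay ring $S$; the Koszul/Hilbert series bookkeeping and the shift by $d+d_2$ then give exactly the interval $d+d_1-3\le k\le d+d_2-3$, with $n(f)_k=1$ there. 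By comparison, the original argument in \cite{DStNF} reads the Hilbert function of $N(f)$ off the resolution \eqref{r2} combined with the self-duality $N(f)_k\cong N(f)_{3(d-2)-k}^\vee$, which is taken as a previously established input; your approach re-proves that duality as a byproduct and makes transparent why $\dim N(f)_k$ is exactly $1$ on the interval, at the cost of invoking graded local duality. Your caveat on the rigidity statement is also apt: with Sticlaru's definition, rigidity is $N(f)_d=0$, which by the first part is equivalent to $d_1\ge 4$ \emph{or} $d_2\le 2$; the latter degenerate possibility (e.g.\ a smooth conic, which is nearly free with $d_1=d_2=1$) shows the literal ``if and only if $d_1\ge 4$'' needs $d_2\ge 3$, a hypothesis implicit in \cite{DStNF} and harmless for the curves considered in this paper, where $d_1\ge 4$ is checked directly (e.g.\ $d_1=k\ge 5$ for $C_{2k}$).
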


\subsection{Characterization of free and nearly free reduced plane curves}
\mbox{}

Just recently  Dimca provides in \cite{D15} 
the following characterization of free and nearly free reduced  plane curves.
For a positive integer $r$, define
$$
\tau(r)_{\max}:= (d-1)(d-r-1)+r^2.
$$
\begin{thm}[\cite{D15}]\label{thm:dimca}
Let $C\subset\PP^2$ be a reduced curve of degree~$d$ defined by $f=0$, and let $r:=\mdr(f)$.
\begin{enumerate}
\enet{\rm(\arabic{enumi})}
\item\label{corCTC}
If $r=<\frac{d}{2}$, then $ \tau(C) =\tau(r)_{\max}$
if and only if $C:f=0$ is a free curve. 
\item\label{thm1-dimca} If $r=\leq\frac{d}{2}$, then $ \tau(C) = \tau(r)_{\max}-1$ if and only if $C$ is a nearly free curve.
\end{enumerate} 
\end{thm}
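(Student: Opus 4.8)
The plan is to reduce both statements to the numerical constraints already recorded in \eqref{free1} and \eqref{nfree1} together with the general inequality of du Plessis and Wall relating $\tau(C)$ and $r = \mdr(f)$. First I would recall that for any reduced plane curve of degree $d$ with $r = \mdr(f) < d/2$ one has the two-sided bound
\begin{equation*}
(d-1)(d-r-1) + \binom{r+1}{2} - \binom{r}{2} \;\le\; \tau(C) \;\le\; \tau(r)_{\max} = (d-1)(d-r-1) + r^2,
\end{equation*}
and that the upper bound $\tau(C) = \tau(r)_{\max}$ is attained exactly when the syzygy module $\ar(f)$ contains a second independent syzygy in degree $d-1-r$, i.e. when $\ar(f) = S(-r)\oplus S(-(d-1-r))$ is free of rank $2$. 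This is precisely condition (3) in the definition of a free divisor, with exponents $d_1 = r$, $d_2 = d-1-r$; conversely if $C$ is free then \eqref{free1} forces $d_1 = r$ (since $\mdr(f)=d_1$ for a free curve) and hence $\tau(C) = (d-1)^2 - r(d-1-r) = \tau(r)_{\max}$. This settles part \ref{corCTC}.

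For part \ref{thm1-dimca} the strategy is parallel but one notch down. Starting from a minimal generating set of $\ar(f)$, whose first generator sits in degree $r = \mdr(f)$, I would analyse what it means for $\tau(C)$ to miss the du Plessis--Wall maximum by exactly $1$. The key structural input is Remark \ref{rk-near}: a resolution of the shape \eqref{r2} is governed by three syzygies $r_1, r_2, r_3$ of degrees $d_1 = r$, $d_2 = d_3 = d - d_1$ and a single relation among them. One shows that $\tau(C) = \tau(r)_{\max} - 1$ forces $\ar(f)$ to fail to be free but to be generated by three such syzygies with exactly one relation between them, which is condition (3) of Definition \ref{def2}; then the numerical identity \eqref{nfree1}, namely $\tau(C) = (d-1)^2 - d_1(d_2 - 1) - 1$ with $d_1 + d_2 = d$ and $d_1 = r$, is checked to coincide with $\tau(r)_{\max} - 1$. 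Conversely, a nearly free curve has $\mdr(f) = d_1 \le d/2$ and, by \eqref{nfree1}, exactly this value of $\tau$, giving the reverse implication.

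The main obstacle is the forward direction of part \ref{thm1-dimca}: deducing from the single numerical equality $\tau(C) = \tau(r)_{\max} - 1$ that the module $\ar(f)$ has precisely the predicted shape (three generators in degrees $r, d-r, d-r$ with one syzygy), rather than merely bounding its Hilbert function. This requires controlling the graded pieces $\ar(f)_k$ for $k$ near $d-1-r$ and translating the defect in $\tau$ into a statement about $\dim N(f)_k$; the Hilbert-function bookkeeping connecting $\tau(C)$, $\dim M(f)_k$ and $\dim N(f)_k$, combined with the self-duality of $N(f)$, is what pins down $n(f)_k \le 1$ for all $k$ and hence places us in case (1) of Definition \ref{def2}. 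Once that rigidity is in hand, the equivalences already listed in Definition \ref{def2} close the argument.
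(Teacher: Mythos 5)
First, a point of comparison: the paper does not prove Theorem~\ref{thm:dimca} at all --- it is quoted from \cite{D15}, and the text only remarks that part~\ref{corCTC} is a corollary of \cite[Theorem~3.2]{duPCTC}. So the only steps of your proposal that can be matched against anything in the paper are the easy converse implications, and those you do correctly: if $C$ is free then $\mdr(f)=d_1$, $d_1+d_2=d-1$ and \eqref{free1} give $\tau(C)=(d-1)^2-r(d-1-r)=\tau(r)_{\max}$, and if $C$ is nearly free then \eqref{nfree1} gives $\tau(C)=\tau(r)_{\max}-1$. Your overall strategy (du Plessis--Wall's bound plus Hilbert-function analysis of $N(f)$) is also the route actually followed in \cite{D15}, so the plan is pointed in the right direction.

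As a proof, however, the proposal has genuine gaps exactly at the forward implications, which are the entire content of the theorem. For part~\ref{corCTC} you assert that the maximum $\tau(C)=\tau(r)_{\max}$ is attained ``exactly when'' $\ar(f)$ contains a second independent syzygy in degree $d-1-r$, i.e.\ when $\ar(f)=S(-r)\oplus S(-(d-1-r))$; that assertion \emph{is} du Plessis--Wall's theorem, so you must either cite it (as the paper does) or prove it, and your sketch does neither --- no argument is given why a curve attaining the maximum could not, for instance, need three minimal generators of $\ar(f)$. For part~\ref{thm1-dimca} the sentence ``one shows that $\tau(C)=\tau(r)_{\max}-1$ forces $\ar(f)$ \dots to be generated by three such syzygies with exactly one relation'' is precisely the new theorem of \cite{D15}; invoking ``Hilbert-function bookkeeping'' and ``the self-duality of $N(f)$'' names the ingredients without carrying out the argument that converts the defect $\tau(r)_{\max}-\tau(C)=1$ into the bound $n(f)_k\le 1$ for all $k$. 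Two further inaccuracies matter: the lower bound you quote, $(d-1)(d-r-1)+\binom{r+1}{2}-\binom{r}{2}$, is not the du Plessis--Wall lower bound (theirs is $(d-1)(d-r-1)$; the binomial term is a \emph{correction to the upper bound}, by $-\binom{2r-d+2}{2}$, in the range $2r\ge d-1$), and this is not a harmless slip, because the boundary case $r=d/2$ allowed in part~\ref{thm1-dimca} is exactly the case where the uncorrected maximum $\tau(r)_{\max}$ is unattainable and the characterization requires the separate analysis given in \cite{D15}; your sketch treats $r\le d/2$ uniformly and never addresses it.
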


As it is recalled in~\cite{D15}, Theorem~\ref{thm:dimca}\ref{corCTC} 
is a corollary of \cite[Theorem~3.2]{duPCTC} by du Plessis and Wall.

\section{High-genus curves which are free or nearly free divisors}

\subsection{Transformations of curves by Kummer covers}\label{kummer}
\mbox{}

A Kummer cover is a map $\pi_k:\PP^2\to\PP^2$ given by 
$\pi_k([x:y:z]):=[x^k:y^k:z^k]$.
Kummer covers are a very useful tool in order to construct complicated
algebraic curves starting from simple ones. Since Kummer covers are
finite Galois unramified covers of $\PP^2\setminus \{xyz=0\}$ with $\gal(\pi_k)\cong\Z/k\Z \times\Z/k\Z$, 
topological properties of the new curves can be obtained: Alexander polynomial, fundamental group, 
characteristic varieties and so on (see 
\cite{ea:jag,ac:98,ji:99,uludag:01,Hirano-construction,ji-kloosterman,aco:14,LindnerDiplom} 
for papers using these techniques).

\begin{ex}
In \cite{uludag:01}, Uluda{\u{g}} constructs new examples of Zariski pairs using former ones and
Kummer covers. He also uses the same techniques to construct infinite families of curves
with finite non-abelian fundamental groups.
\end{ex}

\begin{ex}
In~\cite{Hirano-construction,ji:99}, the Kummer covers allow to construct curves with~\emph{many cusps} and
extremal properties for their Alexander invariants. These ideas are pushed further in \cite{ji-kloosterman}
where the authors find Zariski triples of curves of degree~$12$ with $32$~ordinary cusps (distinguished by their
Alexander polynomial). Within the same ideas Niels Lindner~\cite{LindnerDiplom}
constructed an example of a cuspidal curve $C'$ of degree 12 with 30 
cusps and Alexander polynomial $t^2-t+1$. For this, he started with a sextic $C_0$ with 6 cusps, 
admitting a toric decomposition. He pulled back $C_0$ under a Kummer map $\pi_2:\PP^2\to\PP^2$ 
ramified above three inflectional tangents of $C_0$. 
Since the sextic is of torus type, 
then same holds for the pullback. Lindner showed
that the Mordell-Weil lattice has rank 2 and that the Mordell-Weil group contains $A_2(2)$.
\end{ex}

A  systematic study of Kummer covers of projective plane curves have been done by
J.I.~Cogolludo ,  J.~Ortigas and the first named author  in \cite[\S5]{aco:14}. 
Some of  results are collected next.

Let ${C}$ be a (reduced) projective curve of degree~$d$ of equation $F_d(x,y,z)=0$
and let $\bar{C}_{k}$ be its transform by a Kummer cover $\pi_k$, $k\geq 1$. Note 
that $\bar{C}_{k}$ is a projective curve of degree~$d k$ of equation $F_d(x^k,y^k,z^k)=0$.

\begin{definition}\cite{aco:14}\label{typeofpoints}
Let $P\in\PP^2$ such that $P:=[x_0:y_0:z_0]$. We say
that $P$ is a point of \emph{type $(\C^*)^2$ } (or simply of \emph{type $2$}) if 
$x_0 y_0 z_0\neq 0$. If $x_0=0$ but $y_0 z_0\neq 0$ the point is said to be of 
\emph{type~$\C^*_x$} (types $\C^*_y$ and $\C^*_z$ are defined accordingly).
Such points will also be referred to as \emph{type $1$} points. The corresponding line
(either $L_X:=\{X=0\}$, $L_Y:=\{Y=0\}$, or $L_Z:=\{Z=0\}$) a type-$1$ lies on will be referred to as 
their \emph{axis}. The remaining points $P_x:=[1:0:0]$, $P_y:=[0:1:0]$, and 
$P_z:=[0:0:1]$ will be called \emph{vertices} (or type~0 points) and their axes are
the two lines (either $L_X$, $L_Y$, or $L_Z$) they lie on.
\end{definition}

\begin{rk}\label{todas-preimag}\cite{aco:14}
  Note that a point of type $\ell$, $\ell=0,1,2$ in $\PP^2$ has exactly $k^\ell$ 
preimages under~$\pi_k$. It is also clear that the local type of $\bar{C}_{ k}$ at any
two points on the same fiber are analytically equivalent. The singularities of $\bar{C}_{ k}$ are described in the following proposition.
\end{rk}

\begin{prop}\label{prop-sing}\cite{aco:14}
Let $P\in\PP^2$ be a point of type~$\ell$ and $Q\in \pi_k^{-1}(P)$. One has the following:
\begin{enumerate}
\enet{\rm(\arabic{enumi})}
 \item If $\ell=2$, then $({C},P)$ and $(\bar{C}_{k},Q)$ are analytically isomorphic. 
 \item If $\ell=1$, then $(\bar{C}_{ k},Q)$ is a singular point of type~$1$ if and only if~$m>1$,
where $m:=({C}\cdot\bar{L})_P$ and $\bar L$ is the axis of $P$.
 \item If $\ell=0$, then $(\bar{C}_{ k},Q)$ is a singular point.
\end{enumerate}
\end{prop}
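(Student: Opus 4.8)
The plan is to treat the three cases $\ell=2,1,0$ separately, working throughout in affine charts where the Kummer cover $\pi_k$ is the monomial map. First I would set up notation: let $P=[x_0:y_0:z_0]$ and let $Q=[\xi_0:\eta_0:\zeta_0]\in\pi_k^{-1}(P)$, so that $\xi_0^k=x_0$, etc.\ (after suitable scaling). Choose an affine chart of the target $\PP^2$ in which $P$ becomes the origin or a finite point, and the compatible chart upstairs in which $Q$ is a finite point; in these charts $\pi_k$ is given, up to reordering, by $(u,v)\mapsto(u^k,v^k)$, $(u,v)\mapsto(u^k,v)$, or $(u,v)\mapsto(u,v)$ restricted near $Q$, depending on how many of the coordinates of $Q$ vanish — equivalently on the type $\ell$. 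The germ $(\bar C_k,Q)$ is then the pullback of the germ $(C,P)$ under the corresponding monomial map.

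For $\ell=2$ (case (1)), none of the coordinates of $Q$ vanish, so in the appropriate charts $\pi_k$ restricts to a local biholomorphism near $Q$: the map $(u,v)\mapsto(u^k,v^k)$ has nonzero Jacobian at $Q$ since $\xi_0\eta_0\neq0$ (and likewise after adjusting the chart at infinity). Hence $(\bar C_k,Q)$ and $(C,P)$ are analytically isomorphic, which is the assertion; this also immediately gives the claim in Remark~\ref{todas-preimag} that the $k^2$ preimages carry analytically equivalent germs, since the Galois group acts transitively on the fiber by local isomorphisms.

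For $\ell=1$ (case (2)), say $P$ lies on the axis $\bar L=L_X$, so $x_0=0$, $y_0 z_0\neq0$. In a chart the map is locally $(u,v)\mapsto(u^k,v)$ with $Q=(0,\eta_0)$, $\eta_0\neq0$, and $\bar L=\{u=0\}$; so $(\bar C_k,Q)$ is the pullback of $(C,P)$ under a $k$-fold cyclic cover branched along the smooth curve $\bar L$. The intersection multiplicity $m=(C\cdot\bar L)_P$ equals the order of vanishing along $\bar L$ of the defining equation of $C$ restricted transversally — more precisely, writing the germ of $C$ as $g(u,v)=0$, one has $m=\operatorname{ord}_u g(u,v_0)$ where $v=v_0$ parametrizes $\bar L$ locally. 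If $m=1$ the curve $C$ is smooth and transverse to $\bar L$ at $P$, so $g=u\cdot(\text{unit})+\dots$; pulling back replaces $u$ by $u^k$ giving $g(u^k,v)$, which is again smooth (its linear part is the nonzero $v$-derivative term), hence $Q$ is a smooth point. If $m>1$, I would show the pullback is singular: the Newton polygon of $g$ with respect to $(u,v)$ gets stretched in the $u$-direction by a factor $k$, which pushes the lowest-order part to order $\geq\min(m,k\cdot(\dots))\geq 2$, so $Q$ becomes a singular point. The cleanest argument is via the Tjurina or Milnor number, or simply by checking that all first partials of $g(u^k,v)$ vanish at $Q$ when $m\geq2$.

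For $\ell=0$ (case (3)), $Q$ is a vertex, say $Q=P_z=[0:0:1]$ lying on both $L_X$ and $L_Y$; in the chart $z=1$ the map is $(u,v)\mapsto(u^k,v^k)$ with $Q=(0,0)$. Here I would argue that even if $(C,P)$ is smooth, the pullback $g(u^k,v^k)$ has vanishing gradient at the origin once $k\geq2$: indeed $\partial_u[g(u^k,v^k)]=k u^{k-1}g_x(u^k,v^k)$ vanishes at $(0,0)$ for $k\geq2$, and similarly for $\partial_v$, so $(\bar C_k,Q)$ is always singular. (Strictly one should also note $g(0,0)=0$, i.e.\ that $P\in C$, which is part of the hypothesis that $Q\in\pi_k^{-1}(P)$ lies on $\bar C_k$.)

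The main obstacle is case (2) with $m>1$: making precise the claim that stretching the Newton polygon in one variable by the factor $k$ forces a singularity, and doing so uniformly without assuming $C$ is in any normal form. I expect to handle this by a direct computation of partial derivatives of $g(u^k,v)$ at $Q$ combined with a Newton-polygon argument — or, following \cite{aco:14}, by citing the local normal form of a plane-curve germ relative to a smooth transversal and reading off the pullback directly — rather than by any delicate estimate.
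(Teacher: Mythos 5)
The paper itself gives no proof of this proposition (it is quoted from \cite{aco:14}), so the only comparison to make is with the standard local argument, and that is exactly the route you take: pass to affine charts where $\pi_k$ becomes the monomial map, note it is a local biholomorphism at type-$2$ points, and decide smoothness of the pullback germ by the chain rule. Your cases (1) and (3) are correct as written (and you rightly note that $P\in C$ and $k\geq 2$ are needed in (3)).

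There is, however, a concrete error in case (2): you misidentify $m$. With your normalization $\bar L=\{u=0\}$, $P=(0,v_0)$, the number $m=(C\cdot\bar L)_P$ is the order of vanishing of $g$ \emph{restricted to the axis}, i.e.\ $m=\operatorname{ord}_{v-v_0}\,g(0,v)$, not $\operatorname{ord}_u g(u,v_0)$ (the latter is the intersection number with the transversal line $\{v=v_0\}$). This matters: with your $m$ the criterion you set out to verify is false. For $g=(v-v_0)-u^2$ your $m$ equals $2$, yet the pullback $(v-v_0)-u^{2k}$ is smooth at $Q$; for $g=u-(v-v_0)^2$ your $m$ equals $1$, yet the pullback $u^k-(v-v_0)^2$ is singular for $k\geq 2$. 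Accordingly, the step ``$m=1$ so $g=u\cdot(\text{unit})+\dots$'' and the proposed check ``all first partials of $g(u^k,v)$ vanish at $Q$ when $m\geq 2$'' do not go through as stated, and no Newton-polygon estimate is needed anyway. With the correct identification the case closes in one line: since $g(P)=0$, one has $m>1\iff g_v(P)=0$; and for $k\geq 2$ the pullback $h(u,v)=g(u^k,v^k)$ satisfies $h_u=ku^{k-1}g_u(u^k,v^k)$, which vanishes at $u=0$, while $h_v(Q)$ equals a nonzero constant times $g_v(P)$ because the second coordinate of $Q$ is nonzero. Hence $Q$ is a singular point of $\bar C_k$ if and only if $m>1$, the ``type $1$'' assertion being automatic since $Q$ lies on exactly one axis. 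You should also state explicitly that $k\geq 2$ is assumed throughout (2) and (3), as for $k=1$ both parts fail trivially.
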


\begin{rk}\label{todas-familias}
Using Proposition \ref{prop-sing} (1),  if Sing$(C)\subset \{xyz=0\}$ then Sing$(\bar{C}_{ k})\subset \{xyz=0\}$.
\end{rk}

\begin{ex}\label{ex-singpin}\cite{aco:14}
In some cases, we can be more explicit about the singularity type of $(\bar{C}_k,Q)$.
If $P$ is of type~$1$, $(C,P)$ is smooth and $m:=({C}\cdot\bar{L})_P$
then $(\bar{C}_k,Q)$ has the same topological type as $u_0^k-v_0^m=0$. In particular, if $m=2$,
then $(\bar{C}_k,Q)$ is of type $\mathbb{A}_{k-1}$.
\end{ex}

In order to better describe singular points of type $0$ and $1$ of $\bar C_k$ we will introduce some notation.
Let $P\in\PP^2$ be a point of type~$\ell=0,1$ and $Q\in \pi_k^{-1}(P)$ a singular point of 
$\bar C_k$. Denote by $\mu_P$ (resp. $\mu_Q$) the Milnor number of $C$ at $P$ (resp. $\bar C_k$ at~$Q$).
Since $\ell=0,1$, then $P$ and $Q$ belong to either exactly one or two axes. 
If $P$ and $Q$ belong to an axis $\bar L$, then $m_P^{\bar L}:=(C\cdot \bar L)_P$ 
(analogous notation for $Q$).
More specific details about singular points of types~$0$ and $1$ can be described as follows.

\begin{prop}\label{prop-sing-kummer}\cite{aco:14}
Under the above conditions and notation one has the following
\begin{enumerate}
\enet{\rm(\arabic{enumi})}
 \item\label{prop-sing-kummer1} 
For $\ell=1$, $P$ belongs to a unique axis $\bar L$ and
\begin{enumerate}
\item $\mu_Q=k\mu_P+(m_P^{\bar L}-1)(k-1)$,
\item If $(C,P)$ is locally irreducible
and $r:=\gcd(k,m_{P}^{\bar L})$, then $(C,Q)$
has  $r$ irreducible components
which are analytically isomorphic to each other.
\end{enumerate}

 \item\label{prop-sing-kummer0}
For $\ell=0$, $P$ belongs to exactly two axes $\bar L_1$ and $\bar L_2$ 
\begin{enumerate}
\item $\mu_Q=k^2 (\mu_P-1)+k(k-1)(m_P^{\bar L_1}+m_P^{\bar L_2})+1$ (There is a 
typo in the printed formula in~\cite{aco:14}: this one is obtained by adding $k-k^2$).
\item If $(C,P)$ is locally irreducible and $r:=\gcd(k,m_{P}^{\bar L_1},m_{P}^{\bar L_2})$,
then $(C,Q)$ has $k r$ irreducible components
which are analytically isomorphic to each other. 
\end{enumerate}
\end{enumerate}
\end{prop}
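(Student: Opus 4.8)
The statement is local, so the plan is to work in a small ball around $Q$ and its image around $P$ and to treat the number of branches and the Milnor number by separate arguments. First I would fix convenient coordinates. If $P$ is of type $1$, lying on the axis $\bar L$, choose affine coordinates $(U,W)$ centred at $P$ with $\bar L=\{U=0\}$ and matching coordinates $(u,w)$ centred at $Q$; then $\pi_k$ reads $(u,w)\mapsto(u^k,w^k)$, and since the $W$-coordinate of $P$ is nonzero along $\bar L$ the map $w\mapsto w^k$ is a local isomorphism there, so after an analytic change of coordinates $\pi_k$ becomes $(u,w)\mapsto(u^k,w)$: a cyclic $k$-fold cover whose Galois group $\Z/k\Z$ acts by $u\mapsto\zeta u$ ($\zeta^k=1$) and which is ramified exactly along $\{u=0\}$. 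If $P$ is a vertex (type $0$), lying on the two axes $\bar L_1=\{V=0\}$ and $\bar L_2=\{W=0\}$, the same bookkeeping gives the model $(v,w)\mapsto(v^k,w^k)$: an abelian cover with Galois group $(\Z/k\Z)^2$ ramified along the two coordinate axes. Write $g=0$ for a local equation of $(C,P)$ and $h=g\circ\pi_k=0$ for $(\bar{C}_k,Q)$; then $m:=m_P^{\bar L}$ is the order in $W$ of $g(0,W)$, and $m_1:=m_P^{\bar L_1}$, $m_2:=m_P^{\bar L_2}$ are the orders of $g(0,W)$ in $W$ and of $g(V,0)$ in $V$.

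For the number of branches, items $(1b)$ and $(2b)$, I would use a monodromy count. Here $(C,P)$ is irreducible, so after shrinking the ball $C\setminus\{P\}$ is a punctured disc $\Delta^{*}$ meeting the axes only at $P$, and $\pi_k$ restricts to an unramified Galois cover of it. Using the normalization $t\mapsto(U(t),W(t))$ (resp.\ $t\mapsto(V(t),W(t))$) of $(C,P)$, this cover is $\{u^k=U(t)\}\to\Delta^{*}$ in the type-$1$ case and $\{v^k=V(t),\ w^k=W(t)\}\to\Delta^{*}$ in the vertex case, where $\operatorname{ord}_t U=m$ and $\operatorname{ord}_t V=m_1$, $\operatorname{ord}_t W=m_2$. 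The branches of $\bar{C}_k$ at $Q$ are exactly the connected components of this cover, i.e.\ the orbits of its monodromy around $t=0$. In the type-$1$ case that monodromy multiplies a chosen root by $\zeta^{m}$ with $\zeta=e^{2\pi i/k}$, an element of order $k/\gcd(k,m)$ in $\Z/k\Z$, giving $\gcd(k,m)$ orbits. In the vertex case the monodromy is the translation by $(m_1,m_2)$ on $(\Z/k\Z)^2$, and a short computation with greatest common divisors shows that the cyclic subgroup it generates has order $k/\gcd(k,m_1,m_2)$, hence there are $k\,\gcd(k,m_1,m_2)$ orbits. In both cases the Galois group acts transitively on a fibre of the cover, hence transitively on the components, so the branches are pairwise analytically isomorphic.

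For the Milnor number, items $(1a)$ and $(2a)$, which do not assume $(C,P)$ irreducible, I would compare Milnor fibres. With a compatible choice of Milnor data, $\pi_k$ maps the Milnor fibre $F_Q$ of $h$ onto the Milnor fibre $F_P$ of $g$ as a branched covering: unramified of degree $k$ (resp.\ $k^2$) over $F_P$ minus $F_P\cap\{U=0\}$ (resp.\ minus $F_P\cap(\{V=0\}\cup\{W=0\})$), and $1$-to-$1$ (resp.\ $k$-to-$1$) over that locus. For generic $\varepsilon$ the set $F_P\cap\{U=0\}$ consists of the $m$ simple roots of $g(0,W)=\varepsilon$ inside the ball, and $F_P\cap\{V=0\}$, $F_P\cap\{W=0\}$ of $m_1$, $m_2$ such points, disjoint because the vertex is not on $F_P$. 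Since $\chi(F_P)=1-\mu_P$ and the Euler characteristic is additive and satisfies $\chi(E)=n\,\chi(B)$ for an $n$-sheeted covering, one gets $\chi(F_Q)=k(1-\mu_P)-(k-1)m$ in the type-$1$ case and $\chi(F_Q)=k^{2}(1-\mu_P)-k(k-1)(m_1+m_2)$ in the vertex case. Setting $\chi(F_Q)=1-\mu_Q$ and rearranging gives $\mu_Q=k\mu_P+(m-1)(k-1)$ and $\mu_Q=k^{2}(\mu_P-1)+k(k-1)(m_1+m_2)+1$, as asserted.

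The one genuinely delicate point is the branched-covering picture for the Milnor fibres in the third step: one must choose the radii around $P$ and $Q$ compatibly with $\pi_k$ so that $\pi_k\colon F_Q\to F_P$ is proper with the stated ramification, and check that for generic $\varepsilon$ the intersections $F_P\cap\{U=0\}$, etc., really are the expected numbers of simple points inside the Milnor ball. This is the standard behaviour of nearby fibres under a finite morphism, but it is the step that needs care. Alternatively one can avoid it either by computing $\mu_Q$ from A'Campo's formula applied to the pull-back by $\pi_k$ of an embedded resolution of $(C\cup\bar L,P)$ (resp.\ of $(C\cup\bar L_1\cup\bar L_2,P)$), or, in the reducible case, by decomposing $C$ into its branches and combining $(1b)$, $(2b)$ with the relation $\mu=2\delta-r+1$ and the behaviour of intersection multiplicities under $\pi_k$.
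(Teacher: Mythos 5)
Your argument is correct, but a direct comparison with ``the paper's proof'' is not possible: the paper states Proposition~\ref{prop-sing-kummer} only as a citation from~\cite{aco:14} and gives no proof, so what you have written is a self-contained derivation of a quoted result. Your route is the natural one and all the key steps check out: in the local model the Kummer map becomes $(u,w)\mapsto(u^k,w)$ (type~$1$) or $(v,w)\mapsto(v^k,w^k)$ (vertex); the branches of $(\bar{C}_k,Q)$ are the monodromy orbits of the induced cover of the punctured normalization disc, and the orbit counts $\gcd(k,m)$ and $k^2/\operatorname{ord}(m_1,m_2)=k\gcd(k,m_1,m_2)$ (using $\operatorname{lcm}(k/\gcd(k,m_1),k/\gcd(k,m_2))=k/\gcd(k,m_1,m_2)$) give exactly (1b) and (2b), with analytic isomorphism of the branches coming from the deck transformations, which are global automorphisms fixing $Q$ and preserving $\bar{C}_k$; the Euler-characteristic computation for the induced map of Milnor fibres gives (1a) and (2a), and it is a pleasant consistency check that your count reproduces the \emph{corrected} constant in (2a), i.e.\ the ``$+1$'' fixing the typo of~\cite{aco:14} that the statement points out (e.g.\ $g=V-W$ pulls back to $v^k-w^k$ with $\mu=(k-1)^2$, as your formula predicts). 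The one point you rightly flag -- that $\pi_k^{-1}(F_P)$, intersected with a compatibly chosen neighbourhood of $Q$, really is a Milnor fibre of $f\circ\pi_k$ at $Q$ -- is genuine but standard: since the preimage of a small polydisc centred at $P$ under the local model is again a polydisc centred at $Q$, one may compute both Milnor fibres with polydisc (privileged) neighbourhoods, after which the unramified/ramified sheet count and the additivity and multiplicativity of $\chi$ finish the computation exactly as you state; your proposed fallbacks (A'Campo on the pulled-back resolution, or $\mu=2\delta-r+1$ plus the behaviour of intersection multiplicities under $\pi_k$) would also work. So I see no gap, only that delicate step to be written out carefully if a full proof is wanted rather than a reference to~\cite{aco:14}.
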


\subsection{Irreducible free curves with many  branches and high genus}\label{irred-free}
\mbox{}

Let us consider the quintic curve $C_5$, see Figure~\ref{fig:c5}, defined by $f_5:= (y z-x^2)^2 y-x^5=0$.
It has two singular points, $p_1=[0:1:0]$ of type $\mathbb{A}_4$ and
$p_2=[0:0:1]$ of type $\mathbb{E}_8$, so that it is  rational and cuspidal. 
This curve is free, see~\cite[Theorem~4.6]{DStFD}.
Let us consider the Kummer cover $\pi_k:\PP^2\to\PP^2$ given by 
$\pi_k([x:y:z]):=[x^k:y^k:z^k]$ and its Kummer
transform $C_{5 k}$, defined by $f_{5k}: =(y^k z^k-x^{2k})^2 y^k-x^{5k}=0$.

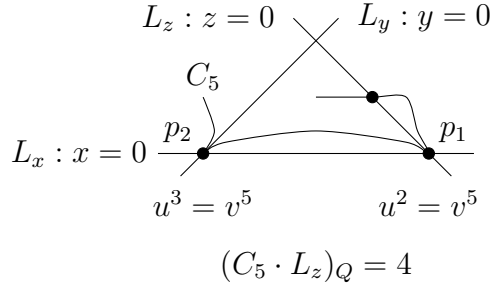
\begin{figure}[ht]
\begin{tikzpicture}[scale=1.5,vertice/.style={draw,circle,fill,minimum size=0.15cm,inner sep=0}]
\coordinate (A) at (-1,0);
\coordinate (B) at (1,0);
\coordinate (C) at (0,1);
\draw ($1.2*(A)-.2*(B)$)-- ($1.2*(B)-.2*(A)$);
\node[left] at  ($1.2*(A)-.2*(B)$) {$L_x:x=0$};
\draw ($1.2*(A)-.2*(C)$)-- ($1.2*(C)-.2*(A)$);
\node[left=20pt] at ($1.2*(C)-.2*(A)$) {$L_z:z=0$};
\draw ($1.2*(B)-.2*(C)$)-- ($1.2*(C)-.2*(B)$);
\node[right=20pt] at ($1.2*(C)-.2*(B)$) {$L_y:y=0$};
\node[vertice] at (A) {};
\node[vertice] at (B) {};
\node[vertice] at ($.5*(B)+.5*(C)$) {};
\draw plot [smooth] coordinates {(-1,.5) (-.9,.2) (A)};
\draw plot [smooth] coordinates {(A) (-.8,.1) (0,.2) (.8,.1) (B) (.9,.2) (.8,.5)
 ($.5*(B)+.5*(C)$) (0,.5)};
\node[below=10pt] at (A.south) {$u^3=v^5$};
\node[below=10pt] at (B.south) {$u^2=v^5$};
\node[above left] at (A) {$p_2$};
\node[above right] at (B) {$p_1$};
\node[above=20pt] at (A) {$C_5$};
\node at (0,-1) {$(C_5\cdot L_z)_Q=4$};
\end{tikzpicture}
\caption{Curve $C_5$}
\label{fig:c5}
\end{figure}

\begin{thm}\label{free-k-branches}
For any $k\geq 1$,  the curve $C_{5 k}$ of degree $d=5k$ defined by
\begin{equation}\label{eq:f5k}
C_{5 k}:\, \, f_{5k}: =(y^k z^k-x^{2k})^2 y^k-x^{5k}=0,
\end{equation}
verifies  the following properties: 
\begin{enumerate}
\enet{\rm(\arabic{enumi})}
\item\label{free-k-branches-1} $\sing(C_{5 k})=\{p_1, p_2\}$. The number 
of branches of $C_{5k}$ at  $p_2$ is  $k$, and at $p_1$, it equals $k$ (if $k$ is odd)
or $2k$ (if $k$ is even).
\item\label{free-k-branches-2} $C_{5 k}$ is a free divisor with exponents $d_1=2k$, $d_2=3k-1$ and $\tau(C_{5 k})=19k^2-8k+1$.
\item\label{free-k-branches-3} $C_{5 k}$ has two irreducible components 
of genus $\frac{(k-2)^2}{4}$ if $k$ is even and irreducible of genus $\frac{(k-1)(k-2)}{2}$ otherwise.
\end{enumerate}

\end{thm}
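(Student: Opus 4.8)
The plan is to compute the invariants of $C_{5k}$ directly from the structure of the Kummer cover $\pi_k$ applied to the free quintic $C_5$, and then to verify freeness via the du Plessis--Wall / Dimca criterion (Theorem~\ref{thm:dimca}\ref{corCTC}).

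First I would establish (1), the description of the singular set. Since $\sing(C_5)=\{p_1,p_2\}\subset\{xyz=0\}$, Remark~\ref{todas-familias} gives $\sing(C_{5k})\subset\{xyz=0\}$; one then checks that the only singular points of $C_{5k}$ arise over $p_1=[0:1:0]$, $p_2=[0:0:1]$ and possibly over generic points of the axes where $C_5$ meets a line. From Figure~\ref{fig:c5}, $C_5$ is smooth and transverse to $L_x$ away from $p_1,p_2$, and $(C_5\cdot L_z)_Q=4$ at the smooth point $Q$ on $L_z$; by Example~\ref{ex-singpin} the preimages of $Q$ give $\mathbb{A}_{k-1}$-type behaviour only if one is not careful — so I must check that in fact $Q$ is one of $p_1,p_2$ or lies on a single relevant axis, which the defining equation makes explicit. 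The branch count at $p_1,p_2$ follows from Proposition~\ref{prop-sing-kummer}\ref{prop-sing-kummer1}--\ref{prop-sing-kummer0}: at $p_2$ (a type-$0$ point, locally the cusp $u^3=v^5$ with both intersection multiplicities with the axes computable from $f_5$), the gcd formula yields $k$ branches; at $p_1$ (also a type-$0$ point, locally $u^2=v^5$), the gcd with the two axis-intersection multiplicities gives $k$ branches when $k$ is odd and $2k$ when $k$ is even.

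Next, for (2), I would compute $\tau(C_{5k})$ as the sum of the local Tjurina numbers at $p_1$ and $p_2$. Using Proposition~\ref{prop-sing-kummer} I get the Milnor numbers $\mu_{Q_1},\mu_{Q_2}$ at the two points; since the singularities are quasi-homogeneous (they are Kummer transforms of the quasi-homogeneous singularities $\mathbb{A}_4$, $\mathbb{E}_8$, and one can check the transforms remain quasi-homogeneous — or alternatively compute $\tau$ directly from a local equation in \texttt{Singular}), we have $\tau_{Q_i}=\mu_{Q_i}$, and summing should give $\tau(C_{5k})=19k^2-8k+1$. Then I would compute $r:=\mdr(f_{5k})$: the free quintic $C_5$ has exponents $(2,2)$, and the pullback of a syzygy of degree $m$ under $\pi_k$ produces a syzygy of degree roughly $km$ for $f_{5k}$ (the chain rule gives $\partial_x(f_5(x^k,y^k,z^k)) = k x^{k-1}(f_5)_x(x^k,y^k,z^k)$, so a relation $a f_x+b f_y+c f_z=0$ for $f_5$ pulls back to $x^{1-k}a(x^k,\dots) f_{5k,x}+\cdots=0$ after clearing, giving degree $2k$); one expects $\mdr(f_{5k})=2k=d_1$. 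With $d=5k$ and $r=2k<d/2$, the criterion $\tau(C_{5k})=\tau(r)_{\max}=(d-1)(d-r-1)+r^2 = (5k-1)(3k-1)+4k^2 = 19k^2-8k+1$ is exactly the computed value, so Theorem~\ref{thm:dimca}\ref{corCTC} yields freeness with exponents $d_1=2k$, $d_2=d-1-d_1=3k-1$ by \eqref{free1}.

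Finally, for (3), the genus is computed from the normalization. The normalization of $C_{5k}$ maps to $C_5$ (which is rational), and $\pi_k$ restricted to $C_{5k}$ has degree $k^2$ but is ramified; applying Hurwitz, or more simply using that $C_{5k}$ has a quotient map to $C_5\cong\PP^1$ of the appropriate degree with branch locus over the intersection points with $\{xyz=0\}$, one obtains the stated genus $\frac{(k-1)(k-2)}{2}$ in the irreducible case. When $k$ is even, the point $p_1$ splits the curve: the branch count $2k$ at $p_1$ together with $k$ at $p_2$ signals (via a gcd/monodromy argument on the Galois action $\Z/k\times\Z/k$) that $C_{5k}$ breaks into two components swapped by an involution, each of genus $\frac{(k-2)^2}{4}$, consistent with $2\cdot\frac{(k-2)^2}{4}+(\text{intersection correction})$ matching the arithmetic genus $\binom{5k-1}{2}-\tau$-type bookkeeping. \textbf{The main obstacle} I anticipate is pinning down $\mdr(f_{5k})$ rigorously: showing the pulled-back syzygies generate and that there is no relation of lower degree. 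The clean route is to argue that $\tau(C_{5k})$ cannot equal $\tau(r)_{\max}$ or $\tau(r)_{\max}-1$ for any $r<2k$ (forcing $\mdr\ge 2k$ by Theorem~\ref{thm:dimca} and the general bound $\tau(C)\le\tau(\mdr(f))_{\max}$), while the explicit degree-$2k$ syzygy gives $\mdr\le 2k$; the reducibility/genus split for even $k$ is the other delicate point, best handled by exhibiting the second component explicitly from a factorization of $f_{5k}$ over $\C$.
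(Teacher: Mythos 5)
Your parts (1) and (3) follow essentially the paper's route (the Kummer-cover analysis via Proposition~\ref{prop-sing-kummer}, and the monodromy of the $\Z/k\Z\times\Z/k\Z$-cover of the rational curve $\tilde C_5$ plus Riemann--Hurwitz; for even $k$ your idea of an explicit factorization works, since $f_{5k}=\bigl(y^{k/2}(y^kz^k-x^{2k})-x^{5k/2}\bigr)\bigl(y^{k/2}(y^kz^k-x^{2k})+x^{5k/2}\bigr)$). The problem is part (2), the heart of the theorem, where your plan has a genuine gap. Your computation of $\tau(C_{5k})$ rests on the claim that the singularities of $C_{5k}$ remain quasi-homogeneous, so that $\tau=\mu$ locally; this is false for $k\ge 2$. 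Proposition~\ref{prop-sing-kummer}\ref{prop-sing-kummer0} gives $\mu_{p_1}=9k^2-6k+1$ and $\mu_{p_2}=15k^2-8k+1$, so the total Milnor number is $24k^2-14k+2$, which exceeds the asserted $\tau(C_{5k})=19k^2-8k+1$ by $(5k-1)(k-1)>0$ as soon as $k\ge 2$. Hence $\tau\ne\mu$, quasi-homogeneity fails, and you have no closed-form way to evaluate the local Tjurina numbers for the whole family (a \texttt{Singular} computation only settles each fixed $k$).

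Second, even granting the value of $\tau$, your argument that $\mdr(f_{5k})=2k$ does not close. The pulled-back syzygy gives $\mdr\le 2k$, but your ``clean route'' --- that $\tau\ne\tau(r)_{\max},\,\tau(r)_{\max}-1$ for $r<2k$ forces $\mdr\ge 2k$ --- does not follow: $\tau(r)_{\max}=(d-1)(d-r-1)+r^2$ is strictly decreasing in $r$ for $r\le (d-1)/2$, so the du~Plessis--Wall bound $\tau\le\tau(\mdr(f))_{\max}$ becomes weaker, not stronger, for smaller $r$, and $\mdr=r<2k$ is perfectly compatible with $\tau=19k^2-8k+1$; Theorem~\ref{thm:dimca} would then merely say that $C_{5k}$ is not free, which is not a contradiction you can exploit. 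So to apply Theorem~\ref{thm:dimca}\ref{corCTC} you must independently prove both that there is no relation of degree $<2k$ and the value of $\tau$, and these are exactly the two quantities your scheme leaves circularly dependent. The paper avoids both problems by determining the syzygy module itself: using the decomposition $S=\bigoplus x^iy^jz^lS_k$ adapted to the Kummer cover, it reduces the syzygies of $f_{5k}$ to syzygies of the eight ideals $J,J_x,\dots,J_{xyz}$ attached to the quintic (Table~\ref{table:syz}), concludes that $\ar(f_{5k})$ is free with generators of degrees $2k$ and $3k-1$, and then reads off $\tau(C_{5k})=19k^2-8k+1$ from \eqref{free1} as a consequence of freeness rather than as an input.
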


\begin{proof}
Part \ref{free-k-branches-1} is an easy consequence of 
in \cite[Lemma 5.3, Proposition 5.4 and Proposition 5.6]{aco:14}.
The singularites Sing$(C_5)=\{p_1,p_2\}$ are of type $0$, in the sense of the Kummer cover $\pi_k$ (see Definition \ref{typeofpoints}) 
and  $C_5$ has no singularities 
outside the intersection points of the axes. Moreover $C_5$
intersects the line $L_z$ transversally at a point of type $1$; 
then by Proposition \ref{prop-sing}~(2) and by
Remark~\ref{todas-familias},  the singularities of $C_{5 k}$
 are exactly  the points $p_1$ and $p_2$.

Since $p_1 $ and $p_2$ are of type $0$ we deduce the structure of $C_{5 k}$ at these points.
using  Proposition \ref{prop-sing-kummer}~\ref{prop-sing-kummer0}~(b).  At $p_1$ one has $(C_5, L_z)_{p_1}=5,$  
$(C_5, L_x)_{p_1}=2$ and $r_{p_1}=\gcd(k,2,5)=1$ for all $k$ , and so that the 
number of branches of $C_{5k}$ at $p_1$ is equal to $k$.
In the same way, at $p_2$,  the intersection  $(C_5, L_x)_{p_2}=2,$  
$(C_5, L_y)_{p_2}=4$ and $r_{p_2}=\gcd(k,2,4)=\gcd(k,2) $.
Ik $k$ is odd,  $r_{p_2}=1$ and the number of branches of $C_{5k}$ at $p_2$ is equal to $k$.
Otherwise $r_{p_2}=2$ and the number of branches of $C_{5k}$ at $p_2$ is equal to $2k$.

In order to prove~\ref{free-k-branches-2}, we follow the ideas of \cite[Theorem~4.6]{DStFD}.
Let us study first the syzygies of the free curve $C_5$. 
Let us denote by $D_{u,v,w}$, the diagonal matrix with entries $u,v,w$,
and define the vectors
$$
R_1=\left(0,\,2 y,\,x^{2} - 3 y z\right),
\quad R_2=\left(2(x^{2} -  y z),2( 5x^{2} - 4 x y + 15 y z),8 x - 45z\right).
$$
Let us denote by $J$ the Jacobian ideal $J$ of~$f_5$.
Let us denote by $J_x$ the ideal generated by $(x {f_5}_x,{f_5}_y,{f_5}_z)$.
In the same way, we consider the ideals $J_y$, $J_z$, $J_{x y}$, $J_{x z}$, $J_{y z}$, $J_{x y z}$.
The Table~\ref{table:syz} shows bases for the syzygies of these ideals,
computed with \texttt{Singular}~\cite{Sing}.
\begin{table}[ht]
\begin{center}
\begin{tabular}{|c|c|c|}\hline
Ideal & First generator & Second generator \\\hline
$J$ & $R_1\cdot D_{1,y,1}$ & $R_2\cdot D_{1,1,z}$\\\hline
$J_x$ & $R_1\cdot D_{1,y,1}$ & $R_2\cdot D_{1,x,x z}$ \\\hline
$J_y$ & $R_1$ & $R_2\cdot D_{y,1,y z}$ \\\hline
$J_z$ & $R_1\cdot D_{1,y z,1}$ &  $R_2$ \\\hline
$J_{x y}$ & $R_1$ & $R_2\cdot D_{y,x,x y z}$  \\\hline
$J_{x z}$ & $R_1\cdot D_{1,y z,1}$ & $R_2\cdot D_{1,x,x}$  \\\hline
$J_{y z}$ & $R_1\cdot D_{1,z,1}$ &  $R_2\cdot D_{y,1,y}$ \\\hline
$J_{x y z}$ & $R_1\cdot D_{1,z,1}$ &  $R_2\cdot D_{y,x,x y}$ \\\hline
\end{tabular}
\vspace*{3mm}
\caption{Bases of syzygies}
\label{table:syz}
\end{center}
\end{table}
Note that
$$
{f_{5 k}}_x\!=\!k x^{k-1} {f_5}_x(x^{k},y^{k},z^{k}),\ {f_{5 k}}_y=k y^{k-1} {f_5}_y(x^{k},y^{k},z^{k}),
\ {f_{5 k}}_z=k z^{k-1} {f_5}_z(x^{k},y^{k},z^{k}).
$$
Let $S_k:=\C[x^k,y^k,z^k]$. We have a decomposition
\begin{equation}\label{eq:desc_k}
S=\bigoplus_{(i,j,l)\in\{0,\dots,k-1\}} x^i y^j z^l S_k.
\end{equation}
By construction, ${f_{5 k}}_x\in x^{k-1} S_k$, ${f_{5 k}}_y\in y^{k-1} S_k$
and ${f_{5 k}}_z\in z^{k-1} S_k$. Hence, in order to compute the
syzygies $(a,b,c)$ among the partial derivatives of $f_{5 k}$, we need to characterize
the triples $(a,b,c)$ such that each entry belongs to a factor of the 
decomposition~\eqref{eq:desc_k}.

Let us assume that $a\in x^{i_x} y^{j_x} z^{j_x} S_k$, $b\in x^{i_y} y^{j_y} z^{j_y} S_k$
and $c\in x^{i_z} y^{j_z} z^{j_z} S_k$. We deduce that
$$
i_x+k-1\equiv i_y\equiv i_z\bmod k\Longrightarrow i=i_y=i_z\text{ and } i_x=
\begin{cases}
i+1&\text{ if }i<k-1\\
0&\text{ if }i=k-1.
\end{cases}
$$
Analogous relations hold for the other indices. We distinguish four cases:
\begin{case}
$i=j=l=k-1$.
\end{case}
In this case $a(x,y,z)=y^{k-1} z^{k-1}\alpha(x^k,y^k,z^k)$, 
$b(x,y,z)=x^{k-1} z^{k-1}\beta(x^k,y^k,z^k)$ and  $c(x,y,z)= x^{k-1} y^{k-1}\gamma(x^k,y^k,z^k)$.
Hence $(\alpha,\beta,\gamma)$ is a syzygy for the partial derivatives of $f_5$.
We conclude that $(a,b,c)$ is a combination of:
$$
R_1(x^k,y^k,z^k)\cdot D_{1,x^{k-1} y^k z^{k-1} ,x^{k-1} y^{k-1}}=
x^{k-1} y^{k-1} R_1(x^k,y^k,z^k)\cdot D_{1, y z^{k-1} ,1}
$$ 
and 
$$
R_2(x^k,y^k,z^k)\cdot D_{y^{k-1} z^{k-1},x^{k-1} z^{k-1} ,x^{k-1} y^{k-1} z^k}=
z^{k-1} R_2(x^k,y^k,z^k)\cdot D_{y^{k-1},x^{k-1} ,x^{k-1}  y^{k-1} z}.
$$ 
Taking out common factors we get syzygies of degree~$2 k$ and $ 3 k-1$.

\begin{case}
$i<k-1$, $j=l=k-1$.
\end{case}
In this case $a(x,y,z)=x^{i+1} y^{k-1} z^{k-1}\alpha(x^k,y^k,z^k)$, 
$b(x,y,z)=x^{i} z^{k-1}\beta(x^k,y^k,z^k)$ and  $c(x,y,z)= x^{i} y^{k-1}\gamma(x^k,y^k,z^k)$.
Hence $(\alpha,\beta,\gamma)$ is a syzygy for the generators of the ideal $J_x$.
It is easily seen that we obtain combination of generators of the
above syzygies. The other cases are treated in the same way.

We conclude that $C_{5k}$ is free with
$d_1=\mdr(f_{5k})=2k$ and  $d_2=d-1-d_1=5k-1-2k=3k-1$.
By equation 
\eqref{free1} $\tau(C_{5k})=19k^2-8k+1$ for all $k$.

In order to prove \ref{free-k-branches-3}, we study the branched cover 
$\tilde{\pi}_k:\tilde{C}_{5k}\to\tilde{C}_5$ between the normalizations of the curves.
The monodromy of this map as an unramified cover of
$\PP^2\setminus\{x y z=0\}$ is determined by an epimorphism
$$
H_1(\PP^2\setminus\{x y z=0\};\mathbb{Z})\to\mathbb{Z}_k\times\mathbb{Z}_k=:G_k
$$
such that the meridians of the lines are sent to $a_x,a_y,a_z$, a system of generators
of $G_k$ such that $a_x+a_y+a_z=0$. Since the singularities of $C_5$ are locally
irreducible, then $C_5$ and $\tilde{C}_5$ are homeomorphic, and the covering
$\tilde{\pi}_k$ is determined
by the monodromy map 
$$
H_1(\tilde{C}_5\setminus\{x y z=0\};\mathbb{Z})\to\mathbb{Z}_k\times\mathbb{Z}_k=:G_k
$$
obtained by composing with the map defined by the inclusion.
Hence $\tilde{C}_5\setminus\{x y z=0\}$ is isomorphic to
$\PP^1\setminus\{\text{three points}\})$. The image of a meridian
corresponding to a point $P$ in the axes is given by
$$
m_P^{L_x} a_x+m_P^{L_y} a_y+m_P^{L_z} a_z.
$$
Hence, we obtain $a_z$ (the smooth point), $3 a_x+5 a_y$ (the $\mathbb{E}_8$-point) and $2 a_x+ 4 a_z$
(the $\mathbb{A}_4$-point). In terms of the basis $a_y,a_z$ they read as
$a_z,2 a_y-3 a_z,-2 a_y+2 a_z$, i.e., the monodromy group is generated by $2 a_y, a_z$. If $k$ is pair,
the monodromy group is of index~$2$ in~$G_k$, and hence $\tilde{C}_{5k}$ has two connected components,
while it is $G_k$ when $k$ is odd and  $\tilde{C}_{5k}$ is connected. These properties
give us the statement about the number of irreducible components.

The genus can be computed using the singularities of $C_{5k}$ or via Riemann-Hurwitz's formula. Note that
the covering $\tilde{\pi}_k$ is of degree~$k^2$ with three ramification points;
at $p_2$ and the smooth point in the axis we find~$k$ preimages, while at $p_1$ we find $k$ preimages if
$k$ is odd and $2k$ preimages if it is even, because of \ref{free-k-branches-1}. Hence, for $k$ odd
\begin{equation*}
\chi(\tilde{C}_{5k})=-k^2+3 k\Longrightarrow g(\tilde{C}_{5k})=\frac{(k-1)(k-2)}{2}
\end{equation*}
and for $k$ even, where $\tilde{C}_{5k}=\tilde{C}_{5k}^1\cup \tilde{C}_{5k}^2$,
\begin{equation*}
\chi(\tilde{C}_{5k})=-k^2+4 k\Longrightarrow g(\tilde{C}_{5k}^i)=
\frac{2-\frac{\chi(\tilde{C}_{5k})}{2}}{2}=
\frac{(k-2)^2}{4}.
\qedhere
\end{equation*}
\end{proof}

So, for odd $k\geq 3$, the curve $C_{5k}$ is an irreducible free curve of positive genus 
whose  singularities have $k$ branches each. 
This  is a counterexample to both the free part of Conjecture~\ref{q2}\ref{c1ii} and 
Conjecture~\ref{q3}\ref{c2i}.

\begin{rk}
Up to projective transformation,
there are two quintic curves with two singular points of type $\mathbb{A}_4$ and
$\mathbb{E}_8$. One is $C_5: (y z-x^2)^2 y-x^5=0$, which is free; there is another
one $D_5: g=y^3 z^2-x^5=0$ (the contact of the tangent line to the
$\mathbb{A}_4$-point distinguishes both curves).
The curve~$D_5$ is nearly free; it is easily seen that $\mdr(g)=1$. Since both singular points
are quasihomogeneous, $\tau(C)=12$, and we may apply Theorem~\ref{thm:dimca}\ref{thm1-dimca};
the pair ($C_5$,$\tilde{C}_5$) is a kind of counterexample
to Terao's conjecture~\cite[Conjecture~4.138]{OT} for irreducible divisors (with constant Tjurina number),
compare with~\cite{scTo:09}.
\end{rk}

\subsection{Irreducible nearly free curves with many branches and high genus}\label{irred-nfree}
\mbox{}

The quartic curve $C_4$ defined by $f_4:= (y z-x^2)^2 -x^3y=0$ has  two singular points, $p_1=[0:1:0]$ of type $\mathbb{A}_2$ and
$p_2=[0:0:1]$ of type $\mathbb{A}_4$, in particular it  is rational and cuspidal.
We will consider the Kummer
transform $C_{4 k}$, defined by $f_{4k}:= (y^k z^k-x^{2k})^2 -x^{3k}y^k=0$,  of the curve $C_4.$ 

\begin{thm}\label{nf-free-k-branches}
For any $k\geq 1$,  the curve $C_{4 k}$ of degree $d=4k$ defined by
 $$ C_{4 k}:\, \, f_{4k}:= (y^k z^k-x^{2k})^2 -x^{3k}y^k=0, $$
verifies  the following properties 
\begin{enumerate}
\enet{\rm(\arabic{enumi})}
\item\label{nf-free-k-branches-1} $\sing(C_{4 k})=\{p_1, p_2\}$. The number 
of branches of $C_{4k}$ at each $p_2$ is  $k$, and at $p_1$, it equals $k$ (if $k$ is odd)
or $2k$ (if $k$ is even).
\item\label{nf-free-k-branches-2} $C_{4 k}$ is a nearly
free divisor with exponents  $d_1=d_2=d_3=2k$ and $\tau(C_{4 k})=6 k (2k-1)$.
\item\label{nf-free-k-branches-3} $C_{4 k}$ has two irreducible components 
of genus $\frac{(k-2)^2}{4}$ if $k$ is even and irreducible of genus $\frac{(k-1)(k-2)}{2}$ otherwise.
\end{enumerate}

\end{thm}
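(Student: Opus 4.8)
The plan is to mirror the proof of Theorem~\ref{free-k-branches} almost line by line, since $C_{4k}$ is built from $C_4$ by the same Kummer cover $\pi_k$ and $C_4$ has the same combinatorial configuration relative to the coordinate triangle as $C_5$.

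\textbf{Part \ref{nf-free-k-branches-1}.} First I would record, from the explicit equation $f_4=(yz-x^2)^2-x^3y$, that $\sing(C_4)=\{p_1,p_2\}$ with $p_1=[0:1:0]$ of type $\mathbb{A}_2$ and $p_2=[0:0:1]$ of type $\mathbb{A}_4$, both of type~$0$ in the sense of Definition~\ref{typeofpoints}, and that $C_4$ has no singularities off the coordinate triangle and meets $L_z$ transversally at a type-$1$ point. Then Proposition~\ref{prop-sing}~(2) together with Remark~\ref{todas-familias} gives $\sing(C_{4k})=\{p_1,p_2\}$. For the branch count I apply Proposition~\ref{prop-sing-kummer}~\ref{prop-sing-kummer0}~(b): at $p_1$ the relevant intersection multiplicities with the two axes through it are $(C_4\cdot L_z)_{p_1}=3$ and $(C_4\cdot L_x)_{p_1}=2$ (I would verify these directly from the local equation), whence $r_{p_1}=\gcd(k,3,2)=1$ and the number of branches is $k\cdot 1=k$ — wait, for type~$0$ the formula gives $kr$ branches, so I must be careful: $\gcd(k,2,3)=1$ gives $k$ branches only when... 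I would simply recompute $\gcd(k,m_P^{\bar L_1},m_P^{\bar L_2})$ at each vertex and read off $kr$. At $p_2$, the $\mathbb{A}_4$-point, the axes are $L_x$ and $L_y$ with $(C_4\cdot L_x)_{p_2}=2$ and $(C_4\cdot L_y)_{p_2}=4$, so $r_{p_2}=\gcd(k,2,4)=\gcd(k,2)$, giving $k$ branches for $k$ odd and $2k$ for $k$ even. (The roles of $p_1,p_2$ may need swapping to match the statement; this is bookkeeping.)

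\textbf{Part \ref{nf-free-k-branches-2}.} This is the main point and the main obstacle. I would compute, with \texttt{Singular}, a minimal system of syzygies of the Jacobian ideal of $f_4$ and of the eight ``twisted'' ideals $J_x,J_y,J_z,J_{xy},J_{xz},J_{yz},J_{xyz}$ obtained by multiplying one or more partials by the corresponding variable, exactly as in Table~\ref{table:syz}. Since $C_4$ is nearly free with $d_1=d_2=2$ (so $\mdr(f_4)=2$ and there are three minimal syzygies $\rho_1,\rho_2,\rho_3$ of degree $2$), the table now has three columns of generators rather than two. Using ${f_{4k}}_x=kx^{k-1}{f_4}_x(x^k,y^k,z^k)$ and the analogous formulas, together with the decomposition $S=\bigoplus_{(i,j,l)} x^iy^jz^l S_k$ from \eqref{eq:desc_k}, I would run through the four cases $i=j=l=k-1$; exactly one index $<k-1$; etc. — identical to the proof of Theorem~\ref{free-k-branches} — to show that every syzygy of $(f_{4k})_x,(f_{4k})_y,(f_{4k})_z$ is, after extracting common monomial factors, a combination of the pulled-back generators. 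Counting degrees, the three minimal generators of $\ar(f_{4k})$ all have degree $2k$. By Definition~\ref{def2}(3) this means $C_{4k}$ is nearly free with $d_1=d_2=d_3=2k$, and then \eqref{nfree1} gives $\tau(C_{4k})=(4k-1)^2-2k(2k-1)-1=12k^2-6k=6k(2k-1)$. I expect the genuine difficulty to be the case analysis verifying that no new syzygies of unexpected low degree appear and that the three degree-$2k$ syzygies are indeed a \emph{minimal} generating set — i.e. that $\mdr(f_{4k})=2k$ and not smaller; here I would either invoke a direct dimension count $\dar(f_{4k})_m=0$ for $m<2k$ coming from the case analysis, or appeal to Theorem~\ref{thm:dimca}\ref{thm1-dimca} with $r=2k\le d/2=2k$ and the computed $\tau$.

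\textbf{Part \ref{nf-free-k-branches-3}.} I would repeat verbatim the monodromy argument from the proof of Theorem~\ref{free-k-branches}. The normalization $\tilde C_4$ is homeomorphic to $C_4$ (singularities locally irreducible, being cusps), hence $\tilde C_4\setminus\{xyz=0\}\cong\PP^1\setminus\{\text{three points}\}$, and the cover $\tilde\pi_k:\tilde C_{4k}\to\tilde C_4$ is classified by the map $H_1(\tilde C_4\setminus\{xyz=0\};\Z)\to G_k=\Z_k\times\Z_k$ sending the three meridians to $m_P^{L_x}a_x+m_P^{L_y}a_y+m_P^{L_z}a_z$ for $P$ the three special points on the axes. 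Plugging in the multiplicities found in Part~(1) — the transverse point contributing $a_z$, the $\mathbb A_4$-point and the $\mathbb A_2$-point contributing the other two — I get a subgroup of $G_k$ which equals $G_k$ when $k$ is odd (so $\tilde C_{4k}$ is connected, $C_{4k}$ irreducible) and has index $2$ when $k$ is even (two components). Finally Riemann–Hurwitz for the degree-$k^2$ cover with three branch fibres of sizes $k,k,k$ (resp. $k,k,2k$ when $k$ even) gives $\chi(\tilde C_{4k})=-k^2+3k$, i.e. $g=\frac{(k-1)(k-2)}{2}$ for $k$ odd, and $\chi=-k^2+4k$ split over two components of genus $\frac{(k-2)^2}{4}$ for $k$ even. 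This completes the proof.
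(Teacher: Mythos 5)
Your proposal follows essentially the same route as the paper's proof: part (1) via Proposition~\ref{prop-sing-kummer}\ref{prop-sing-kummer0}(b) with the same intersection numbers $(2,3)$ and $(2,4)$ at the two vertices, part (2) by computing the three degree-$2$ syzygies of $f_4$ (and of the twisted ideals), pulling them back through the decomposition \eqref{eq:desc_k} to get three degree-$2k$ generators with a single relation, hence nearly free with $d_1=d_2=d_3=2k$ and $\tau=6k(2k-1)$ from \eqref{nfree1}, and part (3) by the same monodromy plus Riemann--Hurwitz argument (the monodromy group is again generated by $a_z$ and $2a_y$, giving the parity dichotomy). The only caution is that your fallback for establishing $\mdr(f_{4k})=2k$ via Theorem~\ref{thm:dimca}\ref{thm1-dimca} would be circular if $\tau$ is taken from \eqref{nfree1}; the direct case analysis you propose first, as in the paper, is the right argument.
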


\begin{proof}
Since Sing$(C_4)(=\{p_1,p_2\})$ are points of type $0$,  $C_4$ meets $\{xyz=0\}$ 
at three points $p_1, p_2$
and transversally at $p_3$ which is of type 1 then Sing$(C_{4 k})=\{p_1, p_2\}$
and to prove Part \ref{nf-free-k-branches-1}
its enough to find the number of branches of $C_{4 k}$ at these points
using  Proposition \ref{prop-sing-kummer}~\ref{prop-sing-kummer0}~(b).  At $p_1$ one has $(C_4, L_z)_{p_1}=3,$  
$(C_4, L_x)_{p_1}=2$ and $r_{p_1}=\gcd(k,2,3)=1$ for all $k$ , and so that the 
number of branches of $C_{4k}$ at $p_1$ is equal to $k$.
In the same way, at $p_2$,  the intersection  $(C_4, L_x)_{p_2}=2,$  
$(C_4, L_y)_{p_2}=4$ and $r_{p_2}=\gcd(k,2,4)=\gcd(k,2) $.
Ik $k$ is odd,  $r_{p_2}=1$ and the number of branches of $C_{4k}$ at $p_2$ is equal to $k$.
Otherwise $r_{p_2}=2$ and the number of branches of $C_{4k}$ at $p_2$ is equal to $2k$.

The proof of Part \ref{nf-free-k-branches-2} follows the same guidelines as in Theorem~\ref{free-k-branches}.
With the notations of that proof, a generator system for the syzygies of~$J$
(Jacobian ideal of $f_4$) is given by:
\begin{equation}
\begin{aligned}
R_1:=&\left(y (3 x - 4  z) ,\,3 y(4 x - 3 y),\,z(9 y-20 x)\right),\\
R_2:=&\left(- x(x+ 2 z),\,-4 x^{2} + 3 x y + 10 y z,\,-z(3 x + 10z)\right),\\
R_3:=&\left(x y,\,-3 y^{2},\,2 x^{2} + 3 y z\right).
\end{aligned}
\end{equation}
These syzygies satisfy the relation $x R_1 +3 y R_2+10 zR_3=0$.
And by Dimca Steclaru Remark (\ref{rk-near}) $C_{4}$ is a nearly
free divisor with exponents  $d_1=d_2=d_3=2$.

For the ideal $J_z$, we have a similar situation.
For the other ideals, their syzygy space is free of rank~$2$.
Using these results it is not hard to prove that the syzygies of $f_{4k}$
are generated by
\begin{equation*}
\begin{aligned}
R_{k,1}:=&\left(y^k (3 x^k - 4  z^k) ,\,3 x^{k-1}y(4 x^k - 3 y^k),\,x^{k-1}z(9 y^k-20 x^k)\right),\\
R_{k,2}:=&\left(-x y^{k-1} (x^k+ 2 z^k),\,-4 x^{2k} + 3 x^k y^k + 10 y^k z^k,\,-y^{k-1}z(3 x^k + 10z^k)\right),\\
R_{k,3}:=&\left(x y^k z^{k-1},\,-3 y^{k+1} z^{k-1},\,2 x^{2k} + 3 y^k z^k\right).
\end{aligned}
\end{equation*}
The results follow as in the proof of Theorem~\ref{free-k-branches}.

These syzygies satisfy the relation $x R_{k,1} +3 y R_{k,2}+10 zR_{k,3}=0$ and by Dimca Steclaru Remark (\ref{rk-near}) $C_{4k}$ is a nearly
free divisor with exponents  $d_1=d_2=d_3=2k$ and by equation 
\eqref{nfree1}  $\tau(C_{4 k})=6 k (2k-1)$.

The proof of Part \ref{nf-free-k-branches-3} follows the same ideas  as in Theorem~\ref{free-k-branches}  \ref{free-k-branches-3}. 
\end{proof}

So, for odd $k\geq 3$, the curve $C_{4k}$ is an irreducible nearly free curve of positive genus 
whose  singularities have $k$ branches each. 
This  is a counterexample to both the nearly free part of Conjecture~\ref{q2}\ref{c1ii} and 
Conjecture~\ref{q3}\ref{c2ii}.

\subsection{Positive genus nearly-free curves with many singularities.}\label{ellip-near}
\mbox{} 

Let us consider the conic $C_2$ given by $f_2 =x^2+y^2+z^2-2(xy+xz+yz)=0$.
This conic is tangent to three axes and it is very useful to produce interesting
curves using Kummer covers.

\begin{figure}[ht]
\begin{center}
\begin{tikzpicture}[line cap=round,line join=round,x=1.0cm,y=1.0cm,scale=.4]
\draw(1.02,2.26) circle (2.2497110925627757cm);
\draw (0.,9.391190912668357)-- (5.764926282338022,-2.2539601776544487);
\draw (1.7360913065284267,9.307373371088104)-- (-3.754752182990192,-2.8425781801871306);
\draw (-5.628535136720331,0.06903741060312854)-- (7.658386004935119,-0.048545962331876635);
\end{tikzpicture}
\caption{Conic $C_2$.}
\label{fig:c2}
\end{center}
\end{figure}
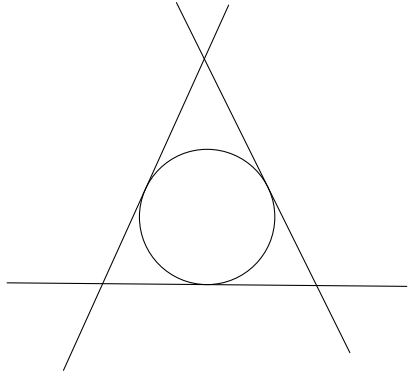

\begin{thm}\label{nf-free-genus}
For any $k\geq 1$,  the curve $C_{2 k}$ of degree $d=2k$ defined by
 $$ C_{2 k}:\, \, f_{2k}:= x^{2k}+y^{2k}+z^{2k}-2(x^ky^k+x^kz^k+y^kz^k)=0, $$
verifies  the following properties 
\begin{enumerate}
\enet{\rm(\arabic{enumi})}
\item\label{nf-free-genus-1} $\sing(C_{2 k})$ are $3k$ singular points
of type $\mathbb{A}_{k-1}$.
\item\label{nf-free-genus-2} $C_{2 k}$ is a nearly
free divisor with exponents  $d_1=d_2=d_3=k$ and $\tau(C_{2 k})=3k(k-1)$.
\item\label{nf-free-genus-3} $C_{2 k}$ is irreducible of genus~$\frac{(k-1)(k-2)}{2}$
if $k$ is odd and it has four irreducible smooth components of degree~$\frac{k}{2}$ if $k$ is even.
\end{enumerate}
\end{thm}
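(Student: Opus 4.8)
The plan is to prove the three assertions in the order \ref{nf-free-genus-1}, \ref{nf-free-genus-3}, \ref{nf-free-genus-2}, since the description of the singular points feeds the genus computation, while the syzygy analysis is the only delicate point. For \ref{nf-free-genus-1} I would argue exactly as in Theorem~\ref{free-k-branches}\ref{free-k-branches-1}: the conic $C_2$ is smooth, and since $f_2|_{x=0}=(y-z)^2$, $f_2|_{y=0}=(x-z)^2$, $f_2|_{z=0}=(x-y)^2$, it is tangent to the three axes at the type-$1$ points $[0:1:1]$, $[1:0:1]$, $[1:1:0]$, each with intersection multiplicity $m=2$ with its axis, and it avoids the three vertices. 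By Proposition~\ref{prop-sing}(1) the curve $C_{2k}$ is smooth over the (smooth) type-$2$ points, by (3) it misses the vertices, and by (2) its only singularities lie over the three tangency points; each of these has exactly $k$ preimages (Remark~\ref{todas-preimag}), and by Example~\ref{ex-singpin} with $m=2$ each such preimage is of type $\mathbb A_{k-1}$. Hence $\sing(C_{2k})$ consists of $3k$ points of type $\mathbb A_{k-1}$.

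For \ref{nf-free-genus-3} I would start from the identity
\[
f_2(X,Y,Z)=-(\sqrt X+\sqrt Y+\sqrt Z)(-\sqrt X+\sqrt Y+\sqrt Z)(\sqrt X-\sqrt Y+\sqrt Z)(\sqrt X+\sqrt Y-\sqrt Z).
\]
Substituting $X=x^k$, $Y=y^k$, $Z=z^k$, when $k$ is even this yields a factorization of $f_{2k}$ into four pairwise distinct Fermat-type (hence smooth) curves of degree $k/2$. When $k$ is odd I would repeat the monodromy argument of Theorem~\ref{free-k-branches}\ref{free-k-branches-3}: since $C_2$ is smooth it coincides with its normalization, $\tilde C_2\setminus\{xyz=0\}\cong\PP^1\setminus\{\text{three points}\}$, and the three meridians are sent, under the monodromy of the branched cover $\tilde\pi_k$, to $2a_x,2a_y,2a_z$ in $G_k=\Z/k\Z\times\Z/k\Z$ (with $a_x+a_y+a_z=0$). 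The image is $2G_k$, which is all of $G_k$ when $k$ is odd, so $C_{2k}$ is irreducible; its genus then follows from Riemann--Hurwitz for the degree-$k^2$ cover $\tilde\pi_k$ — three branch points, each with $k$ preimages of ramification index $k$ — giving $\chi(\tilde C_{2k})=2k^2-3k(k-1)$ and $g(C_{2k})=\frac{(k-1)(k-2)}{2}$ (equivalently, one subtracts $\sum\delta=3k\cdot\frac{k-1}{2}$ from the arithmetic genus $(2k-1)(k-1)$).

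For \ref{nf-free-genus-2}, the Tjurina number is immediate, as the $3k$ singularities are quasi-homogeneous: $\tau(C_{2k})=3k(k-1)$. To prove that $C_{2k}$ is nearly free with $d_1=d_2=d_3=k$ I would follow the scheme of Theorems~\ref{free-k-branches} and \ref{nf-free-k-branches}. First exhibit the three degree-$k$ syzygies of $f_{2k}$,
\begin{align*}
R_{k,1}&=(xy^{k-1},\,x^k-z^k,\,-y^{k-1}z),\\
R_{k,2}&=(z^k-y^k,\,-x^{k-1}y,\,x^{k-1}z),\\
R_{k,3}&=(-xz^{k-1},\,yz^{k-1},\,y^k-x^k),
\end{align*}
which are obtained from one another by the cyclic symmetry of $f_{2k}$ and satisfy the relation $yR_{k,1}+xR_{k,2}+zR_{k,3}=0$. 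The decisive step is to prove that $R_{k,1},R_{k,2},R_{k,3}$ generate $\ar(f_{2k})$: using $(f_{2k})_x=kx^{k-1}(f_2)_x(x^k,y^k,z^k)$ and the decomposition $S=\bigoplus_{0\le i,j,l\le k-1}x^iy^jz^l S_k$ with $S_k=\C[x^k,y^k,z^k]$, an arbitrary syzygy of $f_{2k}$ splits into homogeneous pieces, and matching exponents modulo $k$ reduces each piece to a syzygy of one of the ideals $J$, $J_x$, $J_y$, $J_z$, $J_{xy}$, $J_{xz}$, $J_{yz}$, $J_{xyz}$ attached to $f_2$ (with $J_x=(x(f_2)_x,(f_2)_y,(f_2)_z)$, etc.), exactly as in the proof of Theorem~\ref{free-k-branches}. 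Because $C_2$ is a smooth conic, these syzygy modules are tiny and explicitly computable (with \texttt{Singular}), and one checks that each resulting piece is a combination of $R_{k,1},R_{k,2},R_{k,3}$. With the three generators and their relation in hand, Remark~\ref{rk-near} produces the minimal resolution~\eqref{r2} with $d_1=d_2=d_3=k$, so $C_{2k}$ is nearly free with these exponents, and \eqref{nfree1} reconfirms $\tau(C_{2k})=3k(k-1)$.

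The main obstacle is precisely this last step: the case-by-case bookkeeping of the decomposition — organizing the eight cases and checking that the syzygies of $J,J_x,\dots,J_{xyz}$ are all generated by the three displayed ones (equivalently, that $\ar(f_{2k})$ carries no syzygy of degree $<k$ and is generated in degree $k$). Everything else is routine given the quoted results on Kummer covers and Dimca's characterization; an alternative way to finish \ref{nf-free-genus-2} is to read off $\mdr(f_{2k})=k$ from that bookkeeping and invoke Theorem~\ref{thm:dimca}\ref{thm1-dimca}, since then $k=\mdr(f_{2k})\le d/2=k$ and $\tau(C_{2k})=3k(k-1)=\tau(k)_{\max}-1$.
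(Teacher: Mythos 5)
Your proposal is correct and follows essentially the same route as the paper: the same Kummer-cover analysis of the $3k$ tangency preimages for \ref{nf-free-genus-1}, the same three degree-$k$ syzygies (up to sign) with the linear relation and the decomposition $S=\bigoplus x^iy^jz^lS_k$ to see they generate $\ar(f_{2k})$ for \ref{nf-free-genus-2}, and the same monodromy/Riemann--Hurwitz argument for \ref{nf-free-genus-3}. The only cosmetic differences are that for even $k$ you factor $f_{2k}$ explicitly into four Fermat-type curves of degree $k/2$ where the paper writes $\pi_k=\pi_\ell\circ\pi_2$ and observes that $\pi_2^{-1}(C_2)$ is four lines in general position (the same fact), and that you note the alternative finish via Theorem~\ref{thm:dimca}\ref{thm1-dimca} once $\mdr(f_{2k})=k$ is established.
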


\begin{proof}
To prove \ref{nf-free-genus-1} it is enough to take into the account 
that $C_2$ is nonsingular and by Remark \ref{todas-familias} the singularites of $C_{2 k}$ satisfy
Sing$(C_{2k})\subset \{x y z=0\}$.
Moreover  $C_2$ is tangent to the 
three axes at 3 points $\{p_1, p_2,p_3\}$ of type $1$ with 
$(C_2, L_x)_{p_1}=(C_2, L_y)_{p_2}=(C_2, L_z)_{p_3}=2$ at these points. 
For $i=1, \ldots, 3,$ the points $p_i$ are  of type 1 and by Remark \ref{todas-preimag}  
all the $k$ preimages under $\pi_k$ are analytically equivalent  
 and by Example~\ref{ex-singpin} over each $p_i$ one has $k$ singular points of type $\mathbb{A}_{k-1}$.

Let us study \ref{nf-free-genus-2}.
A generator system for the syzygies of~$J$
(Jacobian ideal of $f_2$) is given by:
\begin{equation}
\begin{aligned}
R_1:=&\left(y -  z,\,y,\,- z\right),\\
R_2:=& \left(-x,\,z -  x,\, z\right),\\
R_3:=&\left(x,\,- y,\,x -  y\right)
.
\end{aligned}
\end{equation}
These syzygies satisfy the relation $xR_1+y R_2+zR_3=0$.
The other ideals have free $2$-rank syzygy modules.
An easy computation gives the following syzygies
for $f_{2k}$:
\begin{equation*}
\begin{aligned}
R_{k,1}:=&\left(y^k -  z^k,\,x^{k-1} y,\,- x^{k-1} z\right),\\
R_{k,2}:=&\left(-xy^{k-1},\,z^k -  x^k,\, y^{k-1}z\right),\\
R_{k,3}:=&\left(xz^{k-1},\,- yz^{k-1},\,x^k -  y^k\right).
\end{aligned}
\end{equation*} 

These syzygies satisfy the relation $xR_{k,1}+y R_{k,2}+zR_{k,3}=0$ and so that 
$C_{2 k}$ is a nearly
free divisor with exponents  $d_1=d_2=d_3=k$ and $\tau(C_{2 k})=3k(k-1)$.

To prove  \ref{nf-free-genus-3} we follow as in the proof
of Theorem~\ref{free-k-branches} \ref{free-k-branches-3}; the main difference is that $\pi_2$ has no ramification over
$C_2$ and in fact $C_4$ is the union of four lines in general position; their preimages.
If $k=2\ell$, since $\pi_{k}=\pi_\ell\circ\pi_2$, each irreducible component is a smooth
Fermat curve.

\end{proof}

These curves, for $k\geq 3$ odd, are of positive genus and give   
a counterexample to the nearly free part of Conjecture~\ref{q2}\ref{c1ii}
(with unbounded genus and number of singularities).
Furthermore, if $k\geq 5$, since $d_1=5\geq 4$ then by 
Corollary \ref{corC1} $C_{2k}$ is projectively rigid.
Note that it is not the case for $C_6$, where we find the dual to a smooth
cubic; in fact, not only this special dual is nearly free, an easy computation
gives the answer for the dual of a generic smooth cubic.

\section{Pencil associated to unicuspidal rational plane curves}\label{nearly-4-branches}

In this section we are going to show that it is possible to construct
a rational nearly free curve with singular points with more than three branches
(i.e., we do not need high genus curves).

Given a curve $C \subset \PP^2$, let $\pi: \widetilde \PP^2  \to \PP^2$
be the minimal,  (not the ``embedded'' minimal) resolution of singularities of $C$;
let $\widetilde C \subset \widetilde \PP^2$ be the strict transform of $C$,
and let ${\tilde \nu}(C)=\widetilde C  \cdot \widetilde C$ denote the self-intersection number of $\widetilde C$ on $\widetilde  \PP^2$.

\medskip

A {\it unicuspidal rational curve\/} is a pair $(C,P)$ where $C$ is a curve
and $P\in C$ satisfies $C \setminus \{P\} \cong \aff^1$.  We call $P$ the distinguished point of $C$.
Given a unicuspidal rational curve $C \subset \PP^2$ with singular point $P$,  in \cite{Dai_Mell_12,Dai_Mell_14},
 D. Daigle and the last named author,   
were interested in the unique pencil $\Lambda_C$ on $\PP^2$ satisfying $C \in \Lambda_C$
and $\Bs( \Lambda_C ) = \{P\}$
where $\Bs( \Lambda_C )$ denotes the base locus of $\Lambda_C$ on $\PP^2$. 

Let $\pi_ m: \widetilde \PP^2_m  \to \PP^2$ be the minimal resolution of the base points of the pencil.
By Bertini theorem, the singularities of the general  member $C_{\text{\rm gen}}$ of  $\Lambda_C$ are contained in    
$\Bs( \Lambda_C ) = \{P\}$.

For a unicuspidal rational curve $C \subset \PP^2$, we show
(cf.\ \cite[Theorem 4.1]{Dai_Mell_14})
that the general member of  $\Lambda_C$ is a rational curve if and only if $\tilde\nu(C) \ge 0$.
In such a case 

\begin{enumerate}
 \item the general element $C_{\text{\rm gen}}$ of  $\Lambda_C$  satisfies that  the   weighted cluster of infinitely near points
of $C_{\text{\rm gen}}$ and $C$    are equal (see \cite[Proposition 2.7]{Dai_Mell_12}).
\item $\Lambda_C$ has either $1$ or $2$ dicriticals,
and at least one of them has degree $1$.
\end{enumerate}

In view of these results, it is worth noting that
\emph{all currently known unicuspidal rational curves $C \subset \PP^2$ satisfy $\tilde\nu(C) \ge 0$},
see \cite[Remark~4.3]{Dai_Mell_14} for details.

Let $C \subset \PP^2$ be a unicuspidal rational curve of degree $d$ and with distinguished point $P$.
 In  in \cite[Proposition~1]{Dai_Mell_14} it is proved   $\Lambda_C$ is in fact  
 the set of effective divisors
$D$ of $\PP^2$ such that $\deg(D) = d$ and $i_P(C, D) \ge d^2$. 
The curve $C \in \Lambda_C$  because $i_P(C,C) = \infty > d^2$.

%
%
%

The main idea here is to take the general member $C_{\text{\rm gen}}$ of the pencil  $\Lambda_C$ 
for a nonnegative curve, i.e $\tilde\nu(C) \ge 0$. Doing this one gets a rational cuve  $C_{\text{\rm gen}}$ 
whose singularities is $\sing (C_{\text{\rm gen}})=\{P\}$ and the branches of  $C_{\text{\rm gen}}$ at $P$ is nothing else than the sum of the degrees
of the dicriticals.

The clasification of unicuspidal rational plane curve  with
$\bar{\kappa}(\PP^2\setminus C)=1$ has been started by Sh.~Tsunoda \cite{Tsu1} and finished by K. Tono  \cite{KeitaTono} 
(see also p. 125 in \cite{FLMN:London2006}).

Our next example starts  with $C_{49}$ with
$\bar{\kappa}(\PP^2\setminus C_{49})=1$. Secondly we take the pencil  $\Lambda_{C_{49}}$, and finally  its general member 
$C_{49, gen}$ has degree $49$ and is rational nearly-free with just one singular point which has 4 branches.

The curve $C_{49}$ is given by
\[f_{49}=((f_1^sy+\sum_{i=2}^{s+1}a_if_1^{s+1-i}x^{ia-a+1})^{a}-f_1^{as+1})/x^{a-1}=0,\]
where $f_1=x^{4-1}z+y^4$,  $a=4$, $s=3$, $a_2=\ldots=a_{s}\in \C$
and  $a_{s+1} \in \C\setminus\{0\} $. We can e.g.  $a_2=\ldots=a_{s}=0\in \C$
and  $a_{s+1}=1$. In this case, $d=a^2s+1= 49$, and the multiplicity sequence of $(C_{49},P)$ of the singular point $P:=[0,0,1]$
is $[36,12_{7},4_{6}]$. It is no-negative with $\tilde\nu(C_{49}) = 1$.

If we consider the rational curves $C_4$ defined by $f_1=0$ (resp. $C_{13}$ defined by 
$f_{13}:\,\, (f_1)^3 y+x^{13}=0$) then $i_P (C_{49}, C_4)=4 \cdot 49$ (resp.   $i_P (C_{49}, C_{13})=13 \cdot 49$).
Thus the curve $C_{13}C_{4}^{s(a-1)}$ belongs to the pencil $\Lambda_{C_{49}}$ if $s(a-1)=9$.

If we take the curve $C_{49,gen}$ defined by $f_{49,gen}:=f_{49}+13 f_{13} f_ 4^9=0$. 
This curve is irreducible, rational and $\sing (C_{49,gen})=\{P\}$ 
and the number of branches of $ C_{49,gen}$ at $P$ is $4$.

It is nearly free by using the computations with \texttt{Singular}~\cite{Sing}. 
A minimal  resolution \eqref{r2} for $f_{49,gen}$ 
is determined by  three syzygies of degrees $d_1=24$ and $d_2=d_3=25$. So that  $\mdr(f_{49,gen})=24$.
The computations yield a relation between these syzygies of multidegree $(2,1,1)$.
Then $C_{49,gen}$ is a rational nearly free curve. Let us note that
a direct computation using \texttt{Singular}~\cite{Sing} of the Tjurina number of the singular point of the curve fails,
but the \emph{nearly-free} condition makes the computation possible via Theorem~\ref{thm:dimca}\ref{thm1-dimca}:
$\tau(C_{49,gen})=(49-1)(49-24-1)+24^2-1=1727$ which is the result in \texttt{Singular} using characteristic 
$p=1666666649$.


\providecommand{\bysame}{\leavevmode\hbox to3em{\hrulefill}\thinspace}
\providecommand{\MR}{\relax\ifhmode\unskip\space\fi MR }
\providecommand{\MRhref}[2]{%
  \href{http://www.ams.org/mathscinet-getitem?mr=#1}{#2}
}
\providecommand{\href}[2]{#2}

\end{document}